\newtheorem{remark}{Remark}[section]
\newenvironment{proof}{{\bf Proof\ }}{\QED\\}
\numberwithin{equation}{section}
\newtheorem{theorem}{Theorem}[section]
\renewcommand{\phi}{\varphi}
\newcommand{\QED}{\hspace*{\fill}\rule{2.5mm}{2.5mm}}
\renewcommand{\vec}[1]{\mbox{\boldmath $#1$}}
\renewcommand{\vec}[1]{\mbox{\boldmath $#1$}}
\begin{document}
\title{Parallelizing the Kolmogorov-Fokker-Planck Equation}
\author{Luca Gerardo-Giorda(1) \\ Minh-Binh Tran(2)\\
 Basque Center for Applied Mathematics\\
 Mazarredo 14, 48009 Bilbao Spain\\
Email: (1) lgerardo@bcamath.org\\
 (2) tbinh@bcamath.org
 }%\author{Minh-Binh TRAN}
\maketitle
\begin{abstract}
We design  the first parallel scheme based on Schwarz waveform relaxation methods for the Kolmogorov-Fokker-Planck equation. We  introduce a new convergence proof for the algorithms. We also provide results about the existence and uniqueness of a solution for this equation with several boundary conditions, in order to prove that our algorithms are well-posed.  Numerical tests are also provided. 
\end{abstract}
{\bf Keyword}
{Domain decomposition, Schwarz waveform relaxation methods, optimized Schwarz, Kolmogorov equation, Fokker-Plank equation. \\{\bf MSC:} {35K55, 65M12, 65M55.}
\section{Introduction}
The Fokker-Planck equation describes the time evolution of the probability density function of the velocity of a particle. It reads for $(t,x,v)\in \mathbb{R}_+^d\times\mathbb{R}^d\times\mathbb{R}^d$, $(d\geq 1)$
\begin{equation}\label{FPE}
\partial_t u+ v\cdot\nabla_x u-\nabla_x V(x)\cdot\nabla_v u=\nabla_v\cdot(\nabla_v u+vu),
\end{equation}
where $V(x)$ is the external potential. Together with the theoretical study of the equation (\cite{DesvillettesVillani:2001:OTG}, \cite{DolbeaultMouhot:2009:HKE}), there are a lot of numerical studies on the Fokker-Plank  and related equations  (\cite{Carrillo:2008:CES}, \cite{Carrillo:2011:ANS},  \cite{BuetDellacherieSentis:2001:NAI}, \cite{LakestaniDehghan:2009:NSFP}, \cite{Dejan:2011:EMN}, \cite{KnezevicSuli:2009:SGA},  \cite{DegondLucquin:1994:AES}), fractional Fokker-Plank equation (\cite{Deng:2009:FEM}),   Wigner-Fokker-Plank equation (\cite{GambaGualdaniSharp:2009:ADG}), Fokker-Planck-Landau equation  \cite{BuetCordierDegondLemou:1997:FAN},  \cite{LemouMieussens:2005:ISF}, \cite{FilbetPareschi:2003:NSFPL}), Vlasov-Fokker-Planck system (\cite{AsadzadehSopasakis:2007:CHP}, \cite{CrouseillesFilbet:2004:NAC}), Vlasov-Poisson-Fokker-Planck system (\cite{Schaeffer:1998:CDS}), Maxwell-Fokker-Planck-Landau equation (\cite{DuclousDubrocaFilbetTikhonchuk:2009:HOR}), Vlasov-Fokker-Planck-Landau equation (\cite{CrouseillesFilbet:2005:CEM}). However, up to our knowledge, there has been no  scheme to parallelize the resolution of these types of kinetic equations.
\\ Parallel computing is a form of computation in which many calculations are carried out in parallel, based on the principle that large problems can be divided into smaller ones. Due to the physical constraints of computers, parallelism has got more and more attention in the recent years. In the last two decades, domain decomposition methods have become a very useful tool to parallelize the numerical resolution of partial differential equations numerically. Schwarz waveform relaxation methods, together with its accelerated version optimized Schwarz waveform relaxation algorithms, is a new class of domain decomposition algorithms adapted to the context of studying evolution equations numerically. For a survey on this, we refer to \cite{Halpern:2009:OSW} and the pioneering works \cite{GanderStuart:1998:STC}, \cite{GanderHalpernNataf:1999:OCO}, \cite{GanderHalpernNataf:1999:OSM}, \cite{GanderHalpern:2001:MDD}, \cite{GanderHalpern:2001:UAD}, \cite{GanderHalpernMagoules:2007:AOS}. 
\\ The main feature of our present work  is to design parallel schemes based on the Schwarz waveform relaxation methods to solve numerically a simplified version of the Fokker-Planck model $(\ref{FPE})$: the Kolmogorov equation  
\begin{equation}\label{Intro1}
\frac{\partial u}{\partial t}+v\frac{\partial u}{\partial x}-\frac{\partial^2 u}{\partial v^2}=f.
\end{equation}
 As we can see from its form, the Kolmogorov equation diffuses not only in the velocity variable, since it contains the diffusion term $\frac{\partial^2 u}{\partial v^2}$, but also in the space variable, because of the hidden interaction between the transport term $  v\frac{\partial u}{\partial x}$ and the diffusion term $\frac{\partial^2 u}{\partial v^2}$. The hypoellipticity and the asymptotic behavior of this operator have been studied in the work of L. Hormander \cite{Hormander:1967:HSO} and of C. Villani \cite{Villani:2006:HDO}. Recently, the null controllability property of this operator has been explored  deeply by K. Beauchard and E. Zuazua in \cite{BeauchardZuazua:2009:SCR}.  
\\ Since the principal part of the operator involves the second derivatives in $v$, we design some Schwarz waveform relaxation algorithms with Dirichlet (classical Schwarz method) or Robin (optimized Schwarz method) transmission condition  for this equation, by splitting the domain in the $v$ direction. For the sake of simplicity, we only split the domain into two subdomains, however, the extension to a larger number of subdomains does not present any theoretical difficulties.
\\ We provide some results on the existence and uniqueness of a solution for the Kolmogorov equation with different boundary conditions, in order to prove that our algorithms are well-posed. 
The convergence proof of Schwarz methods at the continuous level has been a very difficult task. In  \cite{Tran:2012:OOS1}, \cite{Tran:2012:OOS}, \cite{Tran:2010:PSW}, \cite{Tran:2011:PFS}, a new class of techniques has been introduced in order to study this convergence problem of domain decomposition methods. Based on these techniques, we give a new proof of the convergence of our algorithms by some maximum principles and some energy estimates. 
\\ The structure of the paper is the following:
\\ Section $\ref{sec1}$ is devoted to the definition of the equation and the algorithms. In section $\ref{sec2}$, an existence and uniqueness result will be proven for $(\ref{Kolmogorovmain})$, and the subproblems $(\ref{ClassicalSchwarz})$ and $(\ref{RobinSchwarz})$. Since the problems in section $\ref{sec2}$ are considered in the general setting, the domains are chosen in a general manner. In section $\ref{sec3}$ and $\ref{sec4}$ the convergence proofs of the algorithms are given. The numerical experiments are given in Section $\ref{numer}$. We conclude the paper by Section $\ref{secconclu}$.
\section{General Setting}\label{sec1}
We consider the following Kolmogorov equation
\begin{equation}\label{Kolmogorovmain}
\left\{ \begin{array}{ll} \frac{\partial u}{\partial t}+v\frac{\partial u}{\partial x}-\frac{\partial^2 u}{\partial v^2}=f, \mbox{ in } (0,\infty)\times\Omega:=(0,\infty)\times\mathbb{T}\times\mathbb{R}_v,\vspace{.1in}\\
u(0,x,v)=u_0(x,v), \mbox{ on } \mathbb{T}\times\mathbb{R}_v,\end{array}\right.
\end{equation}
where $\mathbb{T}$ is the periodic domain $\mathbb{R}\slash\mathbb{Z}$, $f\in L^\infty(0,\infty,L^2(\mathbb{T},H^1(\mathbb{R}_v)))\cap L^\infty((0,\infty)\times\mathbb{T}\times\mathbb{R}_v)$, $u_0\in L^2(\mathbb{T},H^1(\mathbb{R}_v))\cap L^\infty(\mathbb{T}\times\mathbb{R}_v)$.
\\ {\it Parallel domain decomposition algorithms} consist of dividing the domain $\Omega$ into two parts $\Omega_1:=\mathbb{T}\times(-\infty,L_2)$ and $\Omega_2:=\mathbb{T}\times(L_1,\infty)$, where $L_1<L_2$, and solving $(\ref{Kolmogorovmain})$ parallely on each subdomain $\Omega_1$ and $\Omega_2$. 
\\The {\it classical Schwarz waveform relaxation algorithm} for $(\ref{Kolmogorovmain})$ is then written
\begin{equation}\label{ClassicalSchwarz}
\left\{ \begin{array}{ll} \frac{\partial u_1^n}{\partial t}+v\frac{\partial u_1^n}{\partial x}-\frac{\partial^2 u_1^n}{\partial v^2}=f, \mbox{ in } (0,\infty)\times\Omega_1,\vspace{.1in}\\
u_1^n(0,x,v)=u_0(x,v), \mbox{ on } \Omega_1,\vspace{.1in}\\ 
u_1^n(t,x,L_1)=u_2^{n-1}(t,x,L_1),\mbox{ on } (0,\infty)\times\mathbb{T},\end{array}\right.
\end{equation}
and
\begin{eqnarray*}
\left\{ \begin{array}{ll} \frac{\partial u_2^n}{\partial t}+v\frac{\partial u_2^n}{\partial x}-\frac{\partial^2 u_2^n}{\partial v^2}=f, \mbox{ in } (0,\infty)\times\Omega_2,\vspace{.1in}\\
u_2^n(0,x,v)=u_0(x,v), \mbox{ on } \Omega_2,\vspace{.1in}\\ 
u_2^n(t,x,L_2)=u_1^{n-1}(t,x,L_2),\mbox{ on } (0,\infty)\times\mathbb{T},\end{array}\right.
\end{eqnarray*}
the initial guess $u_1^0$ and $u_2^0$ are chosen arbitrarily in $L^\infty(0,\infty,L^\infty(\mathbb{T}))$ and satisfy the compatibility conditions of the equations:
\begin{eqnarray*}
\left\{ \begin{array}{ll} u_1^0(0,x)=u_0(x,L_2),\mbox{ on } \mathbb{T}\vspace{.1in}\\ 
 u_2^0(0,x)=u_0(x,L_1),\mbox{ on } \mathbb{T}\vspace{.1in}.\end{array}\right.
\end{eqnarray*}
When $n$ tends to $\infty$, $u_1^n$ and $u_2^n$ are expected to converge to $u$ on $\Omega_1$ and $\Omega_2$.
\\  Let $p,q$ be two positive numbers, the {\it optimized Schwarz waveform relaxation algorithm} for $(\ref{Kolmogorovmain})$ is defined by replacing the Dirichlet transmission condition in $(\ref{ClassicalSchwarz})$ 
\begin{equation}\label{RobinSchwarz}
\left\{ \begin{array}{ll} \frac{\partial u_1^n}{\partial t}+v\frac{\partial u_1^n}{\partial x}-\frac{\partial^2 u_1^n}{\partial v^2}=f, \mbox{ in } (0,\infty)\times\Omega_1,\vspace{.1in}\\
u_1^n(0,x,v)=u_0(x,v), \mbox{ on } \Omega_1,\vspace{.1in}\\ 
(p+\frac{\partial}{\partial v})u_1^n(t,x,L_1)=(p+\frac{\partial}{\partial v})u_2^{n-1}(t,x,L_1),\mbox{ on } (0,\infty)\times\mathbb{T},\end{array}\right.
\end{equation}
and
\begin{eqnarray*}
\left\{ \begin{array}{ll} \frac{\partial u_2^n}{\partial t}+v\frac{\partial u_2^n}{\partial x}-\frac{\partial^2 u_2^n}{\partial v^2}=f, \mbox{ in } (0,\infty)\times\Omega_2,\vspace{.1in}\\
u_2^n(0,x,v)=u_0(x,v), \mbox{ on } \Omega_2,\vspace{.1in}\\ 
(q-\frac{\partial}{\partial v})u_2^n(t,x,L_2)=(q-\frac{\partial}{\partial v})u_1^{n-1}(t,x,L_2),\mbox{ on } (0,\infty)\times\mathbb{T},\end{array}\right.
\end{eqnarray*}
where at the first iteration the initial guesses $u_1^0$, $u_1^0$ are chosen such that  $(p+\frac{\partial}{\partial v})u_1^0(t,x,L_2)$ and $(q-\frac{\partial}{\partial v})u_2^0(t,x,L_1)$ are  in $L^\infty(0,\infty,L^\infty(\mathbb{T}))$ and satisfy the compatibility conditions of the equations:
\begin{eqnarray*}
\left\{ \begin{array}{ll} u_1^0(0,x)=u_0(x,L_2),\mbox{ on } \mathbb{T}\vspace{.1in}\\ 
 u_2^0(0,x)=u_0(x,L_1),\mbox{ on } \mathbb{T}\vspace{.1in}.\end{array}\right.
\end{eqnarray*}
Compared with the classical Schwarz waveform relaxation algorithm, optimized ones make $u_1^n$ and $u_2^n$ converge to $u$ in less iterations. Moreover, optimized Schwarz algorithms converge also in the non-overlapping case, a feature not shared by the classical ones.
\section{Existence and Uniqueness Results for the Kolomogorov Equations}\label{sec2}
In this section, we will prove the existence and uniqueness of a solution of the Kolmogorov equation
\begin{equation}\label{Kolmogorov}
\left \{ \begin{array}{ll}\frac{\partial u}{\partial t}+v\frac{\partial u}{\partial x}-\frac{\partial^2 u}{\partial v^2}=f \mbox{ for } (t,x,v)\in(0,\infty)\times\mathbb{T}\times(a,b)\subset\Omega,\vspace{.1in}\\ 
u(0,x,v)=u_0 \mbox{ in } \mathbb{T}\times(a,b).\end{array}\right. 
\end{equation}
where $(a,b)$ could be $\mathbb{R}$, $(-\infty,L_2)$ or $(L_1,\infty)$, $f\in L^\infty((0,\infty),L^2(\mathbb{T}\times(a,b)))$, $u_0\in L^2(\mathbb{T}\times(a,b))$.\\
The boundary conditions that we consider here are of the following two types: 
\begin{itemize}
\item Dirichlet boundary condition 
\begin{equation}\label{DCon1}
u(t,x,L_2)=h_0(t,x),\mbox{ on } (0,\infty)\times\mathbb{T},
\end{equation}
and
\begin{equation}\label{DCon2}
u(t,x,L_1)=h_0(t,x),\mbox{ on } (0,\infty)\times\mathbb{T}, 
\end{equation}
\item Robin boundary condition
\begin{equation}\label{RCon3}
pu(t,x,L_2)+\frac{\partial u(t,x,L_2)}{\partial v}=h_1(t,x),\mbox{ on } (0,\infty)\times\mathbb{T}, 
\end{equation}
and
\begin{equation}\label{RCon4}
qu(t,x,L_1)-\frac{\partial u(t,x,L_1)}{\partial v}=h_1(t,x),\mbox{ on } (0,\infty)\times\mathbb{T}, 
\end{equation}
for $h_0$, $h_1$ $\in L^\infty((0,\infty),L^2(\mathbb{T}))$.
\end{itemize}
Since there exist functions $\tilde{u}_1$, $\tilde{u}_2$, $\tilde{u}_3$ and $\tilde{u}_4$ in $L^\infty((0,\infty),L^2(\mathbb{T},H^1(a,b)))$  such that
\begin{eqnarray*}\label{DBEquation1}
\tilde{u}_1(t,x,L_2)=h_0(t,x),\mbox{ on } (0,\infty)\times\mathbb{T},
\end{eqnarray*} 
\begin{eqnarray*}\label{DBEquation2}
\tilde{u}_2(t,x,L_1)=h_0(t,x),\mbox{ on } (0,\infty)\times\mathbb{T},
\end{eqnarray*} 
and
\begin{eqnarray*}\label{DBEquation3}
p\tilde{u}_3(t,x,L_2)+\frac{\partial \tilde{u}_3(t,x,L_2)}{\partial v}=h_1(t,x),\mbox{ on } (0,\infty)\times\mathbb{T},
\end{eqnarray*}
\begin{eqnarray*}\label{DBEquation3}
q\tilde{u}_4(t,x,L_2)-\frac{\partial \tilde{u}_4(t,x,L_1)}{\partial v}=h_1(t,x),\mbox{ on } (0,\infty)\times\mathbb{T},
\end{eqnarray*}
then by subtracting $u$ with $\tilde{u}_1$, $\tilde{u}_2$, $\tilde{u}_3$ or $\tilde{u}_4$, we can suppose that $h_0=h_1=0$. 
\\ Take the Fourier transform in $x$ of $(\ref{Kolmogorov})$, 
\begin{equation}\label{Kolmogorov0}
\frac{\partial \hat{u}}{\partial t}+iv\zeta{\hat{u}}-\frac{\partial^2\hat{u}}{\partial v^2}=\hat{f}, \mbox{ on } (0,\infty)\times\mathbb{R}\times(a,b).
\end{equation}
%Now put $\psi=i\zeta{\hat{u}},$ we turn the previous equation into
%\begin{eqnarray*}
%\frac{\partial \psi}{\partial t}+v\psi-\frac{\partial^2\psi}{\partial v^2}=i\zeta\hat{f}, \mbox{ on } (0,T)\times\mathbb{C}_\zeta\times(a,b).
%\end{eqnarray*}
Split $\hat{u}$ and $\hat{f}$ into their real and imaginary parts
$$\hat{u}=\hat{u}_1+i\hat{u}_2,$$
$$\hat{f}=\hat{f}_1+i\hat{f}_2,$$
Equation $(\ref{Kolmogorov0})$ becomes
\begin{eqnarray}\label{Kolmogorov1}
\left \{ \begin{array}{ll}\frac{\partial \hat{u}_1(\zeta)}{\partial t}-v\zeta\hat{u}_2(\zeta)-\frac{\partial^2\hat{u}_1(\zeta)}{\partial v^2}=\hat{f}_1(\zeta), \mbox{ on } (0,\infty)\times(a,b),\vspace{.1in}\\
\frac{\partial \hat{u}_2(\zeta)}{\partial t}+v\zeta{\hat{u}_1}(\zeta)-\frac{\partial^2\hat{u}_2(\zeta)}{\partial v^2}=\hat{f}_2(\zeta), \mbox{ on } (0,\infty)\times(a,b),\end{array}\right. 
\end{eqnarray}
the four boundary conditions remains the same after this transformation.
\begin{equation}\label{DCon1b}
\hat{u}(t,\zeta,L_2)=0,\mbox{ on } (0,\infty)\times\mathbb{R},
\end{equation}
\begin{equation}\label{DCon2b}
\hat{u}(t,\zeta,L_1)=0,\mbox{ on } (0,\infty)\times\mathbb{R}, 
\end{equation}
and
\begin{equation}\label{RCon3b}
p\hat{u}(t,\zeta,L_2)+\frac{\partial \hat{u}(t,\zeta,L_2)}{\partial v}=0,\mbox{ on } (0,\infty)\times\mathbb{R}, 
\end{equation}
\begin{equation}\label{RCon4b}
q\hat{u}(t,\zeta,L_1)-\frac{\partial \hat{u}(t,\zeta,L_1)}{\partial v}=0,\mbox{ on } (0,\infty)\times\mathbb{R}, 
\end{equation} For any given $\zeta$, since $\hat{f}_1(\zeta)$, $\hat{f}_2(\zeta)\in L^\infty((0,\infty),L^2(a,b))$, there exists a solution $(\hat{u}_1(\zeta),\hat{u}_2(\zeta))$ in  $L_{loc}^\infty((0,\infty),H^1(a,b))$ of $(\ref{Kolmogorov1})$ (see, for example \cite[Chapter VII]{LS:1967:LQE}). \\
Choose $\zeta$ to be an integer $n$ and use $\hat{u}_1(n)$ and $\hat{u}_2(n)$ as test functions for the system $(\ref{Kolmogorov1})$, 
\begin{eqnarray*}
\left \{ \begin{array}{ll}\frac{1}{2}\int_a^b\frac{\partial |\hat{u}_1(n)|^2}{\partial t}dv-\int_a^b vn\hat{u}_2(n)\hat{u}_1(n)dv-\int_a^b \frac{\partial^2\hat{u}_1(n)}{\partial v^2}\hat{u}_1(n)dv=\int_a^b\hat{f}_1(n)\hat{u}_1(n)dv ,\vspace{.1in}\\
\frac{1}{2}\int_a^b\frac{\partial |\hat{u}_2(n)|^2}{\partial t}dv+\int_a^b vn\hat{u}_1(n)\hat{u}_2(n)dv-\int_a^b \frac{\partial^2\hat{u}_2(n)}{\partial v^2}\hat{u}_2(n)dv=\int_a^b\hat{f}_2(n)\hat{u}_2(n)dv.\end{array}\right. 
\end{eqnarray*}
Adding the two equations, using one of the boundary conditions $(\ref{DCon1b})$, $(\ref{DCon2b})$, $(\ref{RCon3b})$, $(\ref{RCon4b})$ for the case $(a,b)\ne \mathbb{R}$, and taking into account the fact that $p$ and $q$ are positive, we get
\begin{eqnarray*}
& &\frac{1}{2}\int_a^b\frac{\partial |\hat{u}_1(n)|^2}{\partial t}dv+\frac{1}{2}\int_a^b\frac{\partial |\hat{u}_2(n)|^2}{\partial t}dv+\int_a^b\left|\frac{\partial \hat{u}_1(n)}{\partial v}\right|^2dv+\int_a^b\left|\frac{\partial \hat{u}_2(n)}{\partial v}\right|^2dv\vspace{.1in}\\\nonumber 
&\leq &\int_a^b\hat{f}_1(n)\hat{u}_1(n)dv+\int_a^b\hat{f}_2(n)\hat{u}_2(n)\\\nonumber
&\leq &\frac{1}{2}\int_a^b|\hat{f}_1(n)|^2dv+\frac{1}{2}\int_a^b|\hat{u}_1(n)|^2dv+\frac{1}{2}\int_a^b|\hat{f}_2(n)|^2dv+\frac{1}{2}\int_a^b|\hat{u}_2(n)|^2dv,
\end{eqnarray*}
then
\begin{eqnarray*}
& &(\int_a^b\frac{\partial |\hat{u}_1(n)|^2}{\partial t}dv+\int_a^b\frac{\partial |\hat{u}_2(n)|^2}{\partial t}dv)-(\int_a^b|\hat{u}_1(n)|^2dv+\int_a^b|\hat{u}_2(n)|^2dv)\\\nonumber
&\leq &\int_a^b|\hat{f}_1(n)|^2dv+\int_a^b|\hat{f}_2(n)|^2dv.
\end{eqnarray*}
The previous inequality implies
\begin{eqnarray*}
& &\partial_t\left({(\int_a^b|\hat{u}_1(n)|^2dv+\int_a^b|\hat{u}_2(n)|^2dv)\exp(-t)}\right)\\\nonumber
&\leq &\left(\int_a^b|\hat{f}_1(n)|^2dv+\int_a^b|\hat{f}_2(n)|^2dv\right)\exp(-t).
\end{eqnarray*}
Thus
\begin{eqnarray*}
& &\int_a^b(|\hat{u}_1(n,t)|^2+|\hat{u}_2(n,t)|^2)dv\\
&\leq& \int_0^t\exp(s-t)ds\int_a^b(\|\hat{f}_1(n)\|_{L^\infty(0,\infty)}^2+\|\hat{f}_2(n)\|_{L^\infty(0,\infty)}^2)dv+\int_a^b(|\hat{u}_1(n,0)|^2+|\hat{u}_2(n,0)|^2)dv\\
&\leq&C\exp(t) \int_a^b(\|\hat{f}_1(n)\|_{L^\infty(0,\infty)}^2+\|\hat{f}_2(n)\|_{L^\infty(0,\infty)}^2)dv+\int_a^b(|\hat{u}_1(n,0)|^2+|\hat{u}_2(n,0)|^2)dv,
\end{eqnarray*}
Summing up in $\mathbb{Z}$ the previous inequalities yields
\begin{eqnarray*}
& &\int_a^b\sum_{n\in\mathbb{Z}}(|\hat{u}_1(n,t)|^2+|\hat{u}_2(n,t)|^2)dv\\
&\leq& C\exp(t)\int_a^b\sum_{n\in\mathbb{Z}}(\|\hat{f}_1(n)\|_{L^\infty(0,\infty)}^2+\|\hat{f}_2(n)\|_{L^\infty(0,\infty)}^2)dv+\int_a^b\sum_{n\in\mathbb{Z}}(|\hat{u}_1(n,0)|^2+|\hat{u}_2(n,0)|^2)dv,
\end{eqnarray*}
which together with the Parseval's theorem implies 
\begin{eqnarray*}
\int_a^b\int_{\mathbb{R}}|\hat{u}(\zeta,t)|^2d\zeta dv\leq C\exp(t)\int_a^b\int_{\mathbb{R}}\|\hat{f}(\zeta)\|_{L^\infty(0,\infty)}^2d\zeta dv+\int_a^b\int_{\mathbb{R}}|\hat{u}(\zeta,0)|^2d\zeta dv.
\end{eqnarray*}
 Therefore, the inverse Fourier transform $u$ of $\hat{u}_1$ and $\hat{u}_2$ exists and
\begin{eqnarray}\label{Laplacewelldefined}
\int_{\mathbb{T}}\int_a^b|u(t)|^2dvdx\leq C\exp(t)\int_{\mathbb{T}}\int_a^b\|f\|_{L^\infty(0,\infty)}^2dvdx+\int_{\mathbb{T}}\int_a^b|u(0)|^2dvdx.
\end{eqnarray} 
The existence and uniqueness of a solution of $(\ref{Kolmogorov})$ with one of the above boundary conditions then follows. 
\begin{theorem}\label{RExistence}
Suppose that $h_0$, $h_1$ $\in L^\infty((0,\infty),L^2(\mathbb{T}))$, $f\in L^\infty((0,\infty),L^2(\mathbb{T}\times(a,b)))$, $u_0\in  L^2(\mathbb{T}\times(a,b))$, Equation $(\ref{Kolmogorov})$, with either one of the boundary conditions $(\ref{DCon1})$, $(\ref{DCon2})$,  $(\ref{RCon3})$ , $(\ref{RCon4})$ or without boundary condition in the case $(a,b)=\mathbb{R}_v$ has a unique solution in $ L_{loc}^\infty(0,\infty,L^2(\mathbb{T},H^2(a,b)))$.
\end{theorem}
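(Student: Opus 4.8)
The plan is to follow, and complete, the chain of estimates already displayed. First I would carry out the reduction to homogeneous boundary data: taking the lifting functions $\tilde u_1,\dots,\tilde u_4\in L^\infty((0,\infty),L^2(\mathbb{T},H^1(a,b)))$ exhibited above and replacing $u$ by $u-\tilde u_i$, the equation keeps the same form with a source still in $L^\infty((0,\infty),L^2(\mathbb{T}\times(a,b)))$ (it picks up $\partial_t\tilde u_i$, $v\,\partial_x\tilde u_i$, $\partial_v^2\tilde u_i$, which one either makes sense of by choosing $\tilde u_i$ smoother in $v$ or keeps on the right-hand side of the weak formulation) and with a new $L^2$ initial datum; so it suffices to treat $h_0=h_1=0$.

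Next I would expand in Fourier series in $x\in\mathbb{T}$, so that the mode $\hat u(n)$ solves $(\ref{Kolmogorov0})$ and, after splitting into real and imaginary parts, the coupled one–dimensional parabolic system $(\ref{Kolmogorov1})$ with homogeneous boundary conditions $(\ref{DCon1b})$–$(\ref{RCon4b})$. For each fixed $n$, existence of a solution $(\hat u_1(n),\hat u_2(n))\in L^\infty_{loc}((0,\infty),H^1(a,b))$ follows from classical linear parabolic theory (\cite[Chapter VII]{LS:1967:LQE}), the coupling term $\mp vn\,\hat u$ being a lower–order perturbation that is skew in the two components and hence cancels in the energy identities. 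Testing with $\hat u_1(n)$ and $\hat u_2(n)$ and adding, the cross terms $\pm\int vn\,\hat u_1\hat u_2$ cancel pointwise — which is precisely why one never needs $v\hat u\in L^2$, even when $(a,b)=\mathbb{R}$ — while the boundary terms vanish (Dirichlet) or equal $p|\hat u(L_2)|^2$, $q|\hat u(L_1)|^2\ge0$ (Robin, using $p,q>0$). A Gronwall argument then gives the bound $(\ref{Laplacewelldefined})$ uniformly in $n$; summing over $n\in\mathbb{Z}$ and invoking Parseval yields an $L^2(\mathbb{T}\times(a,b))$ bound, so the inverse transform $u$ exists in $L^\infty_{loc}(0,\infty,L^2(\mathbb{T}\times(a,b)))$.

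It then remains to upgrade $L^2_v$ to $H^2_v$, which I would do via the regularizing effect of the operator away from $t=0$: multiplying each equation of $(\ref{Kolmogorov1})$ by $-\partial_v^2\hat u_j(n)$ and integrating (again the first–order–in–$v$ pieces produced by the coupling combine to zero, and the remaining lower–order contributions are absorbed), one gets, for $0<\delta\le t\le T$, a bound on $\|\partial_v\hat u_j(n,t)\|_{L^2(a,b)}^2+\int_\delta^t\|\partial_v^2\hat u_j(n)\|_{L^2(a,b)}^2$ in terms of the data and of the already controlled lower–order quantities; rewriting the equation as $\partial_v^2\hat u(n)=\partial_t\hat u(n)+ivn\,\hat u(n)-\hat f(n)$ controls $\|\hat u(n,t)\|_{H^2(a,b)}$, and summing over $n$ gives $u\in L^\infty_{loc}(0,\infty,L^2(\mathbb{T},H^2(a,b)))$. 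Uniqueness is then immediate: the difference of two solutions solves $(\ref{Kolmogorov})$ with $f=0$, $u_0=0$ and homogeneous boundary data, and the energy identity of the previous paragraph forces it to vanish.

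I expect the main obstacle to be exactly this last step — a quantitative $H^2_v$ estimate that is simultaneously uniform in $t$ on compact subsets of $(0,\infty)$ and summable in the Fourier variable $n$. The difficulty is twofold: the coefficient $vn$ multiplying $\hat u$ is unbounded when $(a,b)$ is a half–line, so the higher–order energy estimate must be organized so that this term is cancelled or absorbed rather than merely bounded; and the boundary terms generated by the integrations by parts in the Dirichlet and Robin cases must be shown to have the correct sign or to be controllable, which is where the boundary regularity (and, morally, the hypoellipticity) of the Kolmogorov operator enters. In the whole–line case $(a,b)=\mathbb{R}$ there are no boundary terms, but one then has to justify the formal integrations by parts by a truncation or density argument.
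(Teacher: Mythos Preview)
Your proposal follows the paper's own argument essentially step for step: the reduction to homogeneous boundary data via liftings, the Fourier expansion in $x$, the real/imaginary splitting into the coupled system $(\ref{Kolmogorov1})$, the appeal to classical parabolic theory mode by mode, the energy estimate in which the skew coupling $\pm vn\,\hat u_1\hat u_2$ cancels, Gronwall, summation over $n$ via Parseval, and the resulting bound $(\ref{Laplacewelldefined})$ are exactly what the paper does in the paragraphs preceding the theorem. In fact you go further than the paper: the paper stops at the $L^2$ estimate $(\ref{Laplacewelldefined})$ and simply asserts that existence and uniqueness ``then follows,'' without carrying out the $H^2_v$ upgrade you outline in your third paragraph---so your honest identification of that step as the main obstacle (and in particular the issue of summability in $n$ of the higher-order estimate, since the leftover terms after integration by parts carry a factor $n$) is a point the paper itself does not address.
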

By a classical induction argument as in \cite{GanderHalpern:2007:OSW}, we have also the well-posedness of the algorithm. 
\begin{theorem}\label{Wellposed}
Suppose that $f\in $ $L^\infty((0,T),L^2(\mathbb{T}\times(a,b)))$, $u_0\in  L^2(\mathbb{T}\times\mathbb{R}_v)$ and  the initial guesses for the Dirichlet transmission condition $u_0^1$, $u_0^2$ $\in L^\infty((0,\infty),L^2(\mathbb{T}))$, the initial guesses for the Robin transmission condition $u_1^0$, $u_1^0$ are chosen such that  $(p+\frac{\partial}{\partial v})u_1^0(t,x,L_2)$ and $(q-\frac{\partial}{\partial v})u_2^0(t,x,L_1)$ are  in $L^\infty(0,\infty,L^2(\mathbb{T}))$, Equations $(\ref{ClassicalSchwarz})$ and $(\ref{RobinSchwarz})$ have unique solutions in   $ L_{loc}^\infty(0,\infty,L^2(\mathbb{T},H^2(-\infty,L_2)))$ and $ L_{loc}^\infty(0,\infty,L^2(\mathbb{T},H^2(L_1,\infty)))$.
\end{theorem}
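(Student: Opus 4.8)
The plan is to argue by induction on the iteration index $n$, reducing each step to Theorem~\ref{RExistence}. For $n=0$ there is nothing to do: the hypotheses are precisely that $u_1^0,u_2^0$ (Dirichlet case), respectively $(p+\tfrac{\partial}{\partial v})u_1^0$ and $(q-\tfrac{\partial}{\partial v})u_2^0$ (Robin case), lie in $L^\infty((0,\infty),L^2(\mathbb{T}))$ and satisfy the stated compatibility relations with $u_0$ at $t=0$. Assume now that for some $n\geq 1$ the iterates $u_1^{n-1}$ and $u_2^{n-1}$ have been constructed and belong to $L^\infty_{loc}(0,\infty,L^2(\mathbb{T},H^2(-\infty,L_2)))$ and $L^\infty_{loc}(0,\infty,L^2(\mathbb{T},H^2(L_1,\infty)))$ respectively; for $n=1$ this regularity is replaced by the hypotheses on the initial guesses, which already provide exactly the transmission data needed below.

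The crux is to verify that the data transmitted into the subproblem at iteration $n$ meets the hypotheses of Theorem~\ref{RExistence}. Take the subproblem for $u_1^n$ on $\mathbb{T}\times(-\infty,L_2)$: its datum at the artificial interface is the trace of $u_2^{n-1}$ there in the classical algorithm, and the trace of $(q-\tfrac{\partial}{\partial v})u_1^{n-1}$ in the optimized one. In one dimension the embeddings $H^2(a,b)\hookrightarrow C^1$ and $H^1(a,b)\hookrightarrow C^0$ are continuous, valid up to the finite endpoint of a half-line, so the value of an $H^2$ function and of its first derivative at a fixed point of the $v$-interval is well defined and depends continuously on the function; the corresponding trace map therefore sends $L^2(\mathbb{T},H^2)$ continuously into $L^2(\mathbb{T})$. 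Together with the $L^\infty_{loc}$-in-time regularity of $u_2^{n-1}$ (resp.\ $u_1^{n-1}$), this shows that the transmitted Dirichlet (resp.\ Robin) datum lies in $L^\infty_{loc}((0,\infty),L^2(\mathbb{T}))$, hence on every finite interval $(0,T)$ it meets the requirement on $h_0$, $h_1$ in Theorem~\ref{RExistence}. The compatibility at $t=0$ is inherited automatically: since $u_i^{n-1}(0,\cdot,\cdot)=u_0$, the trace of the transmitted datum at $t=0$ agrees with the corresponding trace of $u_0$, which is exactly the compatibility condition under which Theorem~\ref{RExistence} applies. The source term $f$ and the initial datum $u_0$, restricted to each subdomain, already satisfy the integrability hypotheses of that theorem.

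With these checks done, Theorem~\ref{RExistence} applied on $\mathbb{T}\times(-\infty,L_2)$, with the boundary condition (\ref{DCon1}) in the classical case or (\ref{RCon3}) in the optimized case, gives a unique $u_1^n\in L^\infty_{loc}(0,\infty,L^2(\mathbb{T},H^2(-\infty,L_2)))$; applied on $\mathbb{T}\times(L_1,\infty)$ with (\ref{DCon2}), resp.\ (\ref{RCon4}), it gives a unique $u_2^n\in L^\infty_{loc}(0,\infty,L^2(\mathbb{T},H^2(L_1,\infty)))$. This advances the induction and, because each step delivers \emph{both} existence and uniqueness, yields uniqueness of the whole sequence of iterates; the algorithms (\ref{ClassicalSchwarz}) and (\ref{RobinSchwarz}) are thus well posed.

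The one genuine obstacle is the trace argument of the second paragraph: one must be sure the transmitted data land in the space Theorem~\ref{RExistence} requires, and in particular reconcile the $L^\infty_{loc}$-in-time regularity of the iterates with the $L^\infty$-in-time hypothesis on the boundary data there — which is handled by running the induction on an arbitrary finite window $(0,T)$ and then letting $T\to\infty$. The remainder is the routine bookkeeping of the classical induction scheme of \cite{GanderHalpern:2007:OSW}.
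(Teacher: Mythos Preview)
Your proof is correct and follows exactly the route the paper indicates: the paper gives no detailed argument but simply invokes ``a classical induction argument as in \cite{GanderHalpern:2007:OSW}'', which is precisely your induction on $n$ reducing each step to Theorem~\ref{RExistence} via the one-dimensional trace embeddings $H^2\hookrightarrow C^1$. (One small slip: the Robin datum transmitted to the $u_1^n$ subproblem is $(p+\tfrac{\partial}{\partial v})u_2^{n-1}$ at $v=L_2$, not $(q-\tfrac{\partial}{\partial v})u_1^{n-1}$; the trace argument is of course unaffected.)
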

\section{Convergence of the Classical Schwarz Waveform Relaxation Algorithm}\label{sec3}
\begin{theorem}\label{ConvergenceClassical}
Suppose that $L_1<L_2$. For all positive number $T$, the algorithm converges in the following sense 
$$\lim_{n\to\infty}\|u_1^n-u\|_{L^\infty((0,T)\times\Omega_1)}=0,$$
and
$$\lim_{n\to\infty}\|u_2^n-u\|_{L^\infty((0,T)\times\Omega_2)}=0.$$
\end{theorem}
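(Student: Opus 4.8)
The plan is to pass to the error functions $e_i^n:=u_i^n-u$ on $(0,\infty)\times\Omega_i$. Each $e_i^n$ solves the \emph{homogeneous} Kolmogorov equation $\partial_t e_i^n+v\,\partial_x e_i^n-\partial_{vv}e_i^n=0$, has zero initial datum, and satisfies a homogeneous Dirichlet transmission condition on the interface of its subdomain: the trace of $e_1^n$ at $v=L_2$ equals the trace of $e_2^{n-1}$ there, and the trace of $e_2^n$ at $v=L_1$ equals the trace of $e_1^{n-1}$ there; the interface $\{v=L_2\}$ lies in the interior of $\Omega_2$ and $\{v=L_1\}$ in the interior of $\Omega_1$ (this is precisely the overlap). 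Since $f$, $u_0$ and the initial guesses are bounded, a routine induction on $n$ using the maximum principle for the Kolmogorov operator (or its explicit fundamental solution) shows that $u$ and every iterate $u_i^n$, hence every $e_i^n$ with $n\ge1$, lies in $L^\infty((0,T)\times\Omega_i)$ for each finite $T$. Set $\alpha_n:=\|e_1^n(\cdot,\cdot,L_1)\|_{L^\infty((0,T)\times\mathbb{T})}$ and $\beta_n:=\|e_2^n(\cdot,\cdot,L_2)\|_{L^\infty((0,T)\times\mathbb{T})}$; these are finite and, because the subdomains overlap, they record the interface error at the \emph{opposite} interface from where each subproblem receives its data.

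The core of the proof is a comparison (barrier) estimate, and the observation that makes it work is that a function depending only on $(t,v)$ is annihilated by the transport term $v\,\partial_x$, so that on such functions the Kolmogorov operator reduces to the heat operator $\partial_t-\partial_{vv}$. Consequently, for every $\mu>0$ the functions $(t,v)\mapsto e^{\mu^2 t-\mu(v-L_1)}$ and $(t,v)\mapsto e^{\mu^2 t-\mu(L_2-v)}$ are exact space-time solutions — in particular nonnegative supersolutions — of the Kolmogorov equation, while $\varepsilon(v^2+3t)$ is a nonnegative supersolution that tends to $+\infty$ as $|v|\to\infty$. Comparing $\pm e_2^n$ on $(0,T)\times\Omega_2$ with $\alpha_{n-1}\,e^{\mu^2 t-\mu(v-L_1)}+\varepsilon(v^2+3t)$ by the weak maximum principle — the quadratic term forces the comparison function to dominate the bounded $e_2^n$ as $v\to+\infty$, so nothing survives from that end — and then letting $\varepsilon\to0$ yields
\begin{equation*}
|e_2^n(t,x,v)|\le \alpha_{n-1}\,e^{\mu^2 t-\mu(v-L_1)}\quad\text{on }(0,T)\times\Omega_2,
\end{equation*}
and, by the mirror-image argument on $\Omega_1$, $|e_1^n(t,x,v)|\le \beta_{n-1}\,e^{\mu^2 t-\mu(L_2-v)}$ on $(0,T)\times\Omega_1$.

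Evaluating the first bound at $v=L_2$ and the second at $v=L_1$, taking the supremum over $t\in(0,T)$, and optimizing in $\mu$ (the minimum of $\mu^2T-\mu(L_2-L_1)$ is attained at $\mu_\star=(L_2-L_1)/(2T)$) gives the contraction
\begin{equation*}
\beta_n\le\theta\,\alpha_{n-1},\qquad \alpha_n\le\theta\,\beta_{n-1},\qquad \theta:=\exp\!\Big(-\tfrac{(L_2-L_1)^2}{4T}\Big)<1,
\end{equation*}
so that $\alpha_n\le\theta^2\alpha_{n-2}$, $\beta_n\le\theta^2\beta_{n-2}$, and hence $\alpha_n,\beta_n\to0$ geometrically (the recursion is started from the finite values produced by the first iteration; a sharper, superlinear rate would follow from iterating the explicit Kolmogorov/heat kernel, but geometric decay already suffices here). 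Feeding $\beta_{n-1}\to0$ and $\alpha_{n-1}\to0$ back into the two barrier bounds with the choice $\mu=\mu_\star$, and taking suprema over $v<L_2$ (resp. $v>L_1$), gives $\|u_1^n-u\|_{L^\infty((0,T)\times\Omega_1)}\le e^{\mu_\star^2T}\beta_{n-1}\to0$ and $\|u_2^n-u\|_{L^\infty((0,T)\times\Omega_2)}\le e^{\mu_\star^2T}\alpha_{n-1}\to0$, which is the assertion.

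The step I expect to require the most care is the rigorous justification of the comparison principle: the Kolmogorov operator is only degenerate parabolic (hypoelliptic, not uniformly parabolic), the domains $\Omega_1,\Omega_2$ are unbounded in $v$, and the functions involved are merely bounded with $H^2$ regularity in $v$, so the maximum principle must be set up in weak form (truncation against the sign of the excess, testing the equation on a bounded cylinder $\mathbb{T}\times(L_1,R)$ and letting $R\to\infty$) together with the quadratic supersolution to control the behaviour as $v\to\pm\infty$. Once this comparison lemma is established, the geometric decay of $\alpha_n,\beta_n$ and the passage from interface traces to full $L^\infty$ norms are routine.
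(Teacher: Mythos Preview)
Your argument is correct and is essentially the paper's own proof: both rely on the observation that $x$-independent exponentials $e^{\pm\mu v+\mu^2 t}$ are exact (super)solutions of the Kolmogorov operator, and both use a weak maximum principle on the error (the paper packages this as a sub-inequality for $\Phi_i^n=(e_i^n)^2\,e^{-\alpha^2 t+\alpha v}$ tested against $(\Phi_i^n-M)_+$, which is exactly your barrier comparison after squaring). Your explicit optimization $\mu_\star=(L_2-L_1)/(2T)$ giving the rate $\theta=\exp\!\big(-(L_2-L_1)^2/(4T)\big)$ is a clean refinement of the paper's unoptimized factor $\exp\big((L_1-L_2)\alpha_0\big)$, and your concern about justifying the maximum principle on the unbounded $v$-half-line is precisely what the paper handles by the same truncation-and-test-with-positive-part device you propose.
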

Since the problems are linear in $u$, we can prove the convergence on the error equation by letting $e_1^n=u_1^n-u$ and $e_2^n=u_2^n-u$, then
\begin{equation}\label{ClassicalSchwarz}
\left\{ \begin{array}{ll} \frac{\partial e_1^n}{\partial t}+v\frac{\partial e_1^n}{\partial x}-\frac{\partial^2 e_1^n}{\partial v^2}=0, \mbox{ in } (0,\infty)\times\Omega_1,\vspace{.1in}\\
e_1^n(0,x,v)=0, \mbox{ on } \Omega_1,\vspace{.1in}\\ 
e_1^n(t,x,L_2)=e_2^{n-1}(t,x,L_2),\mbox{ on } (0,\infty)\times\mathbb{T},\end{array}\right.
\end{equation}
\begin{eqnarray*}
\left\{ \begin{array}{ll} \frac{\partial e_2^n}{\partial t}+v\frac{\partial e_2^n}{\partial x}-\frac{\partial^2 e_2^n}{\partial v^2}=0, \mbox{ in } (0,\infty)\times\Omega_2,\vspace{.1in}\\
e_2^n(0,x,v)=0, \mbox{ on } \Omega_2,\vspace{.1in}\\ 
e_2^n(t,x,L_1)=e_1^{n-1}(t,x,L_1),\mbox{ on } (0,\infty)\times\mathbb{T}.\end{array}\right.
\end{eqnarray*}
\begin{proof}
Let $f_1$ and $f_2$ be two strictly positive and continuous functions on $\mathbb{R}$. Define
$$\Phi_1^n=(e_1^n)^2f_1(t)f_2(v),$$
$$\Phi_2^n=(e_2^n)^2f_1(t)f_2(v).$$
A simple calculation leads to
\begin{equation}\label{DEqPhiLeftf}
\frac{\partial}{\partial t}\Phi_1^n-\frac{\partial^2}{\partial v^2}\Phi_1^n+v\frac{\partial}{\partial x}\Phi_1^n+2\frac{f_2'}{f_2}\frac{\partial}{\partial v}\Phi_1^n+\left(-\frac{f_1'}{f_1}+\frac{f_2''}{f_2}-2\left(\frac{f_2'}{f_2}\right)^2\right)\Phi_1^n\leq 0,
\end{equation}
and
\begin{equation}\label{DEqPhiRightf}
\frac{\partial}{\partial t}\Phi_2^n-\frac{\partial^2}{\partial v^2}\Phi_2^n+v\frac{\partial}{\partial x}\Phi_2^n+2\frac{f_2'}{f_2}\frac{\partial}{\partial v}\Phi_2^n+\left(-\frac{f_1'}{f_1}+\frac{f_2''}{f_2}-2\left(\frac{f_2'}{f_2}\right)^2\right)\Phi_2^n\leq 0.
\end{equation}
Let $\alpha$ be a constant to be chosen later and put $f_1(t)=\exp(-\alpha^2 t)$, $f_2(v)=\exp(\alpha v)$ to get
\begin{equation}\label{DEqPhiLeft}
\frac{\partial}{\partial t}\Phi_1^n-\frac{\partial^2}{\partial v^2}\Phi_1^n+v\frac{\partial}{\partial x}\Phi_1^n+2\alpha\frac{\partial}{\partial v}\Phi_1^n\leq 0,
\end{equation}
and
\begin{equation}\label{DEqPhiRight}
\frac{\partial}{\partial t}\Phi_2^n-\frac{\partial^2}{\partial v^2}\Phi_2^n+v\frac{\partial}{\partial x}\Phi_2^n+2\alpha\frac{\partial}{\partial v}\Phi_2^n\leq 0.
\end{equation}
{\bf Step 1:} The maximum principle.
\\ We prove that the solution $u$ of $(\ref{Kolmogorovmain})$ belongs to  $L^\infty([0,T]\times\mathbb{T}\times\mathbb{R}_v)$. Let $K$ be greater than $\|f\|_{L^\infty([0,T]\times\mathbb{T}\times\mathbb{R}_v)}$ and $\|u_0\|_{L^\infty(\mathbb{T}\times\mathbb{R}_v)}$, then 
\begin{eqnarray*}
\left\{ \begin{array}{ll} \frac{\partial (u-Kt)}{\partial t}+v\frac{\partial (u-Kt)}{\partial x}-\frac{\partial^2 (u-Kt)}{\partial v^2}=f-K, \mbox{ in } (0,T)\times\Omega,\vspace{.1in}\\
(u-Kt)(0,x,v)=u_0(x,v), \mbox{ on } \mathbb{T}\times\mathbb{R}_v.\end{array}\right.
\end{eqnarray*}
Set $\psi=(u-Kt)_+$, where $\psi=(u-Kt)$ for $u\geq Kt$ and $0$ for $u<Kt$. Using $\psi$ as a test function for the above equation, we get
\begin{eqnarray*}
0&\geq&\int_0^T\int_{\mathbb{R}_v}\int_{\mathbb{T}}\frac{\partial}{\partial t}(u-Kt)(u-Kt)_+dxdvdt\\
& &+\int_0^T\int_{\mathbb{R}_v}\int_{\mathbb{T}}v\frac{\partial}{\partial x}(u-Kt)(u-Kt)_+dxdvdt\\
& &+\int_0^T\int_{\mathbb{R}_v}\int_{\mathbb{T}}\frac{\partial}{\partial v}(u-Kt)\frac{\partial}{\partial v}(u-Kt)_+dxdvdt,
\end{eqnarray*}
which yields
\begin{eqnarray*}
0&\geq&\int_{\mathbb{R}_v}\int_{\mathbb{T}}\frac{(u-Kt)_+^2}{2}|_0^Tdxdv+\int_0^T\int_{\mathbb{R}_v}\int_{\mathbb{T}}\left(\frac{\partial}{\partial v}(u-K)_+\right)^2dxdvdt.
\end{eqnarray*}
Hence $(u-Kt)_+=0$, then $u\leq KT$ or $u$ is bounded from above. By a similar argument, we can prove that $u$ is bounded equivalently from below, and $u\in L^\infty([0,T]\times\mathbb{T}\times\mathbb{R}_v)$. 
\\ Let $M=\sup_{(t,x)\in(0,T)\times\mathbb{T}}\{\Phi_2^{n-1}(t,x,L_2)\}$ and suppose that $M<\infty$. Notice that $u_2^0\in L^\infty([0,T]\times\mathbb{T}\times[L_1,\infty))$ and $u\in L^\infty([0,T]\times\mathbb{T}\times\mathbb{R}_v)$, then $M<\infty$ for $n=1$. Set $\varphi=(\Phi_1^n-M)_+$, where $\varphi=\Phi_1^n-M$ for $\Phi_1^n\geq M$ and $0$ for $\Phi_1^n<M$. Using it as a test function for $(\ref{DEqPhiLeft})$, we obtain
\begin{eqnarray*}
0&=&\int_0^T\int_{-\infty}^{L_2}\int_{\mathbb{T}}\frac{\partial}{\partial t}(\Phi_1^n-M)(\Phi_1^n-M)_+dxdvdt\\
& &+\int_0^T\int_{-\infty}^{L_2}\int_{\mathbb{T}}v\frac{\partial}{\partial x}(\Phi_1^n-M)(\Phi_1^n-M)_+dxdvdt\\
& &+\int_0^T\int_{-\infty}^{L_2}\int_{\mathbb{T}}\frac{\partial}{\partial v}(\Phi_1^n-M)\frac{\partial}{\partial v}(\Phi_1^n-M)_+dxdvdt\\
& &+2\alpha\int_0^T\int_{-\infty}^{L_2}\int_{\mathbb{T}}\frac{\partial}{\partial v}(\Phi_1^n-M)(\Phi_1^n-M)_+dxdvdt.
\end{eqnarray*}
This leads to 
\begin{eqnarray*}
0&=&\int_{-\infty}^{L_2}\int_{\mathbb{T}}\frac{(\Phi_1^n-M)_+^2}{2}|_0^Tdxdv+\int_0^T\int_{-\infty}^{L_2}\int_{\mathbb{T}}\left(\frac{\partial}{\partial v}(\Phi_1^n-M)_+\right)^2dxdvdt\\
& &+2\alpha\int_0^T\int_{\mathbb{T}}(\Phi_1^n-M)_+^2|_0^{L_2}dxdt,
\end{eqnarray*}
which gives $(\Phi_1^n-M)_+=0$. As a consequence,
$$\Phi_1^n\leq M,$$
or 
\begin{equation}\label{maximumprinciple1}
\Phi_1^n(t,x,v)\leq \sup_{(t,x)\in(0,T)\times\mathbb{T}}\{\Phi_2^{n-1}(t,x,L_2)\} \mbox{ on }\Omega_1.
\end{equation}
A similar argument leads to
\begin{equation}\label{maximumprinciple2}
\Phi_2^n(t,x,v)\leq \sup_{(t,x)\in(0,T)\times\mathbb{T}}\{\Phi_1^{n-1}(t,x,L_1)\} \mbox{ on }\Omega_2.
\end{equation}
{\bf Step 2:} The convergence estimates.\\
Denote
\begin{eqnarray*}
E^n=\max_{i\in \{1,2\}}\left(\sup_{(t,x)\in((0,T)\times\Omega_i)}\left(e_i^n\right)^2 f_1(t)\right).
\end{eqnarray*}
Since $e_1^0$ and $e_2^0$ are bounded, $E^n$ is bounded.
\\ Inequality $(\ref{maximumprinciple1})$ implies that for $(x,t)$ in $(0,T)\times\mathbb{T}$
\begin{eqnarray*}
\left(e_{1}^n(t,x,L_1)\right)^2f_1(t)f_2(L_1)\leq \sup_{(t,x)\in(0,T)\times\mathbb{T}}(e_{2}^{n-1}(t,x,L_2))^2f_1(t)f_2(L_2),
\end{eqnarray*}
which yields
\begin{eqnarray*}
\left(e_{1}^n(t,x,L_1)\right)^2f_1(t)\leq \exp((L_2-L_1)\alpha)\sup_{(t,x)\in(0,T)\times\mathbb{T}}(e_{2}^{n-1}(t,x,L_2))^2f_1(t).
\end{eqnarray*}
Choosing $\alpha=-\alpha_0$ where $\alpha_0$ is a positive constant to get
\begin{eqnarray*}
\left(e_{1}^n(t,x,L_1)\right)^2f_1(t)\leq \exp((L_1-L_2)\alpha_0)\sup_{(t,x)\in(0,T)\times\mathbb{T}}(e_{2}^{n-1}(t,x,L_2))^2f_1(t).
\end{eqnarray*}
Similarly, by using the same argument and replacing $\alpha$ by $\alpha_0$
\begin{eqnarray*}
\left(e_{2}^n(t,x,L_2)\right)^2f_1(t)\leq \exp((L_1-L_2)\alpha_0)\sup_{(t,x)\in(0,T)\times\mathbb{T}}(e_{1}^{n-1}(t,x,L_1))^2f_1(t).
\end{eqnarray*}
Choose $\alpha=0$, $(\ref{maximumprinciple1})$ and $(\ref{maximumprinciple2})$ imply
\begin{eqnarray*}
E^{n+1}\leq\max\{\sup_{(t,x)\in(0,T)\times\mathbb{T}}\left(e_{1}^n(t,x,L_2)\right)^2f_1(t),\sup_{(t,x)\in(0,T)\times\mathbb{T}}\left(e_{2}^n(t,x,L_1)\right)^2f_1(t)\}.
\end{eqnarray*}
The above inequality implies
\begin{eqnarray*}
E^{n+1}& \leq &\exp((L_1-L_2)\alpha_0)\max\{\sup_{(t,x)\in(0,T)\times\mathbb{T}}(e_{2}^{n-1}(t,x,L_2))^2f_1(t),\\ & &\sup_{(t,x)\in(0,T)\times\mathbb{T}}(e_{2}^{n-1}(t,x,L_2))^2f_1(t)\}.\\
&\leq &\exp((L_1-L_2)\alpha_0) E^{n-1},
\end{eqnarray*}
which shows that the errors converge geometrically $$\lim_{k\to\infty}E^k=0.$$ 
\end{proof}
\section{Convergence of the Schwarz Waveform Relaxation Methods with Robin Transmission Conditions}\label{sec4}
Again, we prove the convergence on the error equation by letting $e_1^n=u_1^n-u$ and $e_2^n=u_2^n-u$, we consider
\begin{equation}\label{RobinSchwarz}
\left\{ \begin{array}{ll} \frac{\partial e_1^n}{\partial t}+v\frac{\partial e_1^n}{\partial x}-\frac{\partial^2 e_1^n}{\partial v^2}=0, \mbox{ in } (0,\infty)\times\Omega_1,\vspace{.1in}\\
e_1^n(0,x,v)=0, \mbox{ on } \Omega_1,\vspace{.1in}\\ 
(p+\frac{\partial}{\partial v})e_1^n(t,x,L_1)=(p+\frac{\partial}{\partial v})e_2^{n-1}(t,x,L_1),\mbox{ on } (0,\infty)\times\mathbb{T},,\end{array}\right.
\end{equation}
\begin{eqnarray*}
\left\{ \begin{array}{ll} \frac{\partial e_2^n}{\partial t}+v\frac{\partial e_2^n}{\partial x}-\frac{\partial^2 e_2^n}{\partial v^2}=0, \mbox{ in } (0,\infty)\times\Omega_2,\vspace{.1in}\\
e_2^n(0,x,v)=0, \mbox{ on } \Omega_2,\vspace{.1in}\\ 
(q-\frac{\partial}{\partial v})e_2^n(t,x,L_2)=(q-\frac{\partial}{\partial v})e_1^{n-1}(t,x,L_2),\mbox{ on } (0,\infty)\times\mathbb{T}.\end{array}\right.
\end{eqnarray*}
For $\phi$ in $L^2((0,\infty)\times\mathbb{T}\times(a,b))$, where $(a,b)\subset \mathbb{R}_v$, we define the norm
\begin{equation*}
|||\phi|||_{\alpha}=\sup_{\alpha'>\alpha}\left[\int_{\alpha'}^{\alpha'+1}\left(\int_0^{\infty}\phi(x)\exp(-y x)dx\right)^2dy\right]^{\frac{1}{2}},
\end{equation*}
and the space
$$\mathbb{L}_{\alpha}^{2}(0,\infty)=\{\phi~:~\phi\in L^2(0,\infty), |\phi|_{\alpha}<\infty\}.$$
\begin{theorem}\label{ConvergenceRobin}
There exists a positive constant $\alpha$ such that the algorithm converges in the following sense
$$\lim_{n\to\infty}|||u_1^n-u|||_{\alpha}=0,$$
and
$$\lim_{n\to\infty}|||u_2^n-u|||_{\alpha}=0.$$
\end{theorem}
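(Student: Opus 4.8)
The plan is to Laplace--transform the error system in time and Fourier--expand it in the periodic variable, turning $(\ref{RobinSchwarz})$ into a family of scalar two--point problems in $v$ indexed by (frequency, Laplace parameter), to exhibit a strict contraction for each of these, and then to recover convergence in $|||\cdot|||_{\alpha}$ from this mode--wise contraction by a soft argument adapted to the structure of that norm. Since $e_{j}^{n}(0,\cdot)=0$ and, by Theorem~\ref{Wellposed} together with $(\ref{Laplacewelldefined})$, $\|e_{j}^{n}(t)\|_{L^{2}}\le Ce^{t/2}$ with $C$ independent of $n$, the Laplace transform $U_{j}^{n}(x,v;\mu)=\int_{0}^{\infty}e_{j}^{n}(t,x,v)e^{-\mu t}\,dt$ is defined for $\mu$ large, and $\partial_{t}$ turns into multiplication by $\mu$. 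Writing $U_{j}^{n}=\sum_{\zeta}\widehat{U}_{j}^{n}(v;\zeta,\mu)e^{i\zeta x}$, each coefficient solves on its half--line the Airy--type ODE $\partial_{v}^{2}\widehat{U}=(\mu+i\zeta v)\widehat{U}$. Because $\mathrm{Re}(\mu+i\zeta v)=\mu>0$ for every $v$, this ODE has a one--dimensional space of $H^{2}$ solutions on each half--line; call $\phi^{-}$ (resp.\ $\phi^{+}$) the one decaying at $-\infty$ (resp.\ $+\infty$), so $\widehat{U}_{1}^{n}=c_{1}^{n}\phi^{-}$ on $(-\infty,L_{2})$ and $\widehat{U}_{2}^{n}=c_{2}^{n}\phi^{+}$ on $(L_{1},\infty)$.

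The traces entering the transmission conditions are governed by the symbols $\Lambda_{1}(v)=\phi^{-\prime}(v)/\phi^{-}(v)$ and $\Lambda_{2}(v)=-\phi^{+\prime}(v)/\phi^{+}(v)$. Multiplying the ODE by $\overline{\phi^{\pm}}$, integrating over a half--line and taking real parts gives
\[
\mathrm{Re}\,\Lambda_{j}(v)=\frac{\int|\partial_{v}\phi|^{2}+\mu\int|\phi|^{2}}{|\phi(v)|^{2}}\ \ge\ \sqrt{\mu},
\]
the inequality following from the identity $|\phi(v)|^{2}=2\int\mathrm{Re}(\overline{\phi}\,\partial_{v}\phi)$ together with Cauchy--Schwarz and AM--GM. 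In particular $|\phi^{\pm}|$ is monotone, so $|\widehat{U}_{1}^{n}(v)|\le|\widehat{U}_{1}^{n}(L_{2})|$ for $v<L_{2}$ and $|\widehat{U}_{2}^{n}(v)|\le|\widehat{U}_{2}^{n}(L_{1})|$ for $v>L_{1}$, and $|\phi^{-}(L_{1})/\phi^{-}(L_{2})|=\exp(-\int_{L_{1}}^{L_{2}}\mathrm{Re}\,\Lambda_{1})\le1$, with the analogous statement for $\phi^{+}$ between $L_{2}$ and $L_{1}$.

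Inserting $\widehat{U}_{1}^{n}=c_{1}^{n}\phi^{-}$, $\widehat{U}_{2}^{n}=c_{2}^{n}\phi^{+}$ into the Robin conditions of $(\ref{RobinSchwarz})$ gives, for each $\zeta$ and each large $\mu$, $\widehat{U}_{1}^{n+1}(L_{2})=\sigma(\zeta,\mu)\,\widehat{U}_{1}^{n-1}(L_{2})$ (and an analogous relation at $L_{1}$), with
\[
\sigma=\frac{p-\Lambda_{2}(L_{2})}{p+\Lambda_{1}(L_{2})}\cdot\frac{q-\Lambda_{1}(L_{1})}{q+\Lambda_{2}(L_{1})}\cdot\frac{\phi^{+}(L_{2})}{\phi^{+}(L_{1})}\cdot\frac{\phi^{-}(L_{1})}{\phi^{-}(L_{2})}.
\]
I would then show $|\sigma(\zeta,\mu)|<1$ for all $\zeta$ once $\mu>\alpha$ with $\sqrt{\alpha}>|p-q|/2$: in the non--overlapping case $L_{1}=L_{2}$ the amplitude factors are $1$ and $\sigma$ regroups into $\tfrac{p-\Lambda_{2}}{q+\Lambda_{2}}\cdot\tfrac{q-\Lambda_{1}}{p+\Lambda_{1}}$, each factor $<1$ since $|p-\Lambda_{2}|^{2}<|q+\Lambda_{2}|^{2}\iff p-q<2\,\mathrm{Re}\,\Lambda_{2}$ and $|q-\Lambda_{1}|^{2}<|p+\Lambda_{1}|^{2}\iff q-p<2\,\mathrm{Re}\,\Lambda_{1}$, both guaranteed by $\mathrm{Re}\,\Lambda_{j}\ge\sqrt{\mu}$; in the overlapping case the Robin quotients may be large but grow only polynomially in $|\zeta|$, whereas each overlap exponential $\exp(-\int_{L_{1}}^{L_{2}}\mathrm{Re}\,\Lambda_{j})$ decays like $e^{-c|\zeta|^{1/2}}$, so $|\sigma|<1$ again. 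For fixed $(\zeta,\mu)$ the recursion then forces the interface traces $\widehat{U}_{1}^{2m}(L_{2};\zeta,\mu)$, $\widehat{U}_{2}^{2m}(L_{1};\zeta,\mu)$ and their odd counterparts to decrease geometrically to $0$ (they are finite at $m=0$ because the initial guesses have finite Robin data and, by the maximum principle of Section~\ref{sec3}, $u\in L^{\infty}$); these moduli are continuous and nonincreasing in $\mu$, so by Dini's theorem the convergence is uniform on compact $\mu$--intervals, while the uniform--in--$n$ bound $\|\widehat{U}_{j}^{n}(\cdot;\mu)\|_{L^{2}}\le C(\mu-\tfrac12)^{-1}$, valid because $e_{j}^{n}(0)=0$, handles the tail $\mu\to\infty$. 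Hence $\sup_{\mu'>\alpha}\int_{\mu'}^{\mu'+1}|\widehat{U}_{1}^{n}(L_{2};\zeta,\mu'')|^{2}d\mu''\to0$ (and similarly at $L_{1}$); Parseval in $x$ and the bound $|\widehat{U}_{j}^{n}(v)|\le|\widehat{U}_{j}^{n}|$ at the interface propagate this into the subdomains, giving $|||u_{j}^{n}-u|||_{\alpha}\to0$.

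The main difficulty, and the reason the result is phrased in the weak Laplace norm $|||\cdot|||_{\alpha}$ rather than a uniform one, is the $v$--dependent potential $\mu+i\zeta v$: the symbols $\Lambda_{j}$ are Airy quotients with no closed form, so their positivity, monotonicity and $\sqrt{\mu}$--lower bound must be extracted from the energy identity, and then the strict contraction $|\sigma(\zeta,\mu)|<1$ must be checked for all $\zeta$ (for each $\mu>\alpha$) by balancing possibly unbounded Robin reflection quotients against the overlap exponentials in this non--symmetric, unbounded--coefficient setting; this is the step I expect to require the most care. A secondary point is the rigorous justification of the Laplace/Fourier reduction and of the identification $\widehat{U}_{j}^{n}=c_{j}^{n}\phi^{\pm}$, which rests on Theorems~\ref{RExistence}--\ref{Wellposed} and on $\mathrm{Re}(\mu+i\zeta v)=\mu>0$.
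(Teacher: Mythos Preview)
Your approach is genuinely different from the paper's. The paper does \emph{not} decompose into Fourier--Laplace modes; instead it integrates the error against weights $f_{1}(t)f_{2}(x)f_{3}(v)$ (resp.\ $f_{4}(v)$) to produce scalar functions $\Phi_{1}^{n}(v),\Phi_{2}^{n}(v)$, derives an ODE for them with the Robin conditions rewritten as $(\ref{RRobinconditions1})$--$(\ref{RRobinconditions2})$, and then runs a single $H^{1}(v)$ energy estimate, choosing the free weights so that the boundary ratios $f_{4}(L_{1})/f_{3}(L_{1})$ and $f_{3}(L_{2})/f_{4}(L_{2})$ absorb all constants and yield a uniform contraction by $1/2$ per iteration. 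This bypasses the Airy equation entirely: no symbols $\Lambda_{j}$, no convergence factor $\sigma(\zeta,\mu)$, no mode--wise analysis. Your route is the classical Gander--Halpern one, which is more explicit and gives quantitative information on $\sigma$, but pays for it by having to handle the $v$--dependent potential $\mu+i\zeta v$ head--on; the paper's weighted--energy trick is precisely designed to avoid that.

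Your non--overlapping argument is clean, and the lower bound $\mathrm{Re}\,\Lambda_{j}\ge\sqrt{\mu}$ is correctly derived. The main gap is in the overlapping case: from $\mathrm{Re}\,\Lambda_{j}\ge\sqrt{\mu}$ alone you only get $|\phi^{\pm}(L_{1})/\phi^{\pm}(L_{2})|\le e^{-\sqrt{\mu}(L_{2}-L_{1})}$, which is \emph{independent of $\zeta$}, so the claimed decay $e^{-c|\zeta|^{1/2}}$ does not follow from what you have proved. Likewise ``the Robin quotients grow only polynomially in $|\zeta|$'' needs an \emph{upper} bound on $|\Lambda_{j}|$, which your energy identity does not supply; you would need genuine Airy asymptotics here. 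Without these, you cannot conclude $|\sigma(\zeta,\mu)|<1$ in the overlapping case, because the two Robin quotients mix $\Lambda_{1}$ and $\Lambda_{2}$ evaluated at \emph{different} points and your regrouping trick from $L_{1}=L_{2}$ no longer applies. Two smaller points: in the Dini step the monotonicity you need is in $n$, not in $\mu$ (the modulus of a Laplace transform is generally not monotone in the real Laplace parameter); and the final ``Parseval in $x$'' step requires a dominated--convergence argument over $\zeta\in\mathbb{Z}$, since $|\sigma(\zeta,\mu)|\to1$ as $|\zeta|\to\infty$ in the non--overlapping case and you do not have a uniform rate --- this can be done (the initial data are $\ell^{2}$ in $\zeta$ and each term is dominated by its value at $n=0$), but it should be said.
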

\begin{proof}Let $\alpha$ is a constant and define  $$f_1=\exp(-2\alpha t).$$  
Let $f_2$, $f_3$, $f_4$ be strictly positive functions on $C^\infty(\mathbb{R})$ such that $f_2, f_3, f_4>\beta$, where $\beta$ is some positive constant and $f_2$ is periodic on $\mathbb{T}$. Suppose that $f_2$, $f_3$, $f_4$ satisfy the following assumptions
\begin{eqnarray}\label{assumptionf3f4}
& &\left|\frac{f_2'}{f_2}\right|<\alpha^{1/3},\left|\frac{f_3'}{f_3}\right|^2<\alpha^{1/3},\left|\frac{f_3''}{f_3}\right|<\alpha^{1/3},\left|\frac{f_4'}{f_4}\right|^2<\alpha^{1/3},\left|\frac{f_4''}{f_4}\right|<\alpha^{1/3},\\\nonumber
& &f'_3(L_2)=f'_4(L_1)=0, \\\nonumber
& &\frac{f_4(L_1)}{f_3(L_1)}C_*<\frac{1}{8},\frac{f_3(L_2)}{f_4(L_2)}C_*<\frac{1}{8},
\end{eqnarray}
where $C^*$ is the constant in $(\ref{Rbalance2})$.
\\ Define
$$\Phi_1^{n+1}=\int_0^\infty\int_{\mathbb{T}}e_1^{n+1}f_1f_2f_3dxdt,$$
$$\Phi_2^{n+1}=\int_0^\infty\int_{\mathbb{T}}e_2^{n+1}f_1f_2f_4dxdt.$$
Suppose that $\alpha>1$, using $(\ref{Laplacewelldefined})$, we can see that $\Phi_1^0$ and $\Phi_2^0$ belong to $H^2(-\infty,L_2)$ and $H^2(L_1,\infty)$ for   $n=0$.  
\\ A simple calculation leads to 
\begin{equation}\label{REqSubdomain1}
\int_0^\infty\int_{\mathbb{T}}e_1^{n+1}f_1f_2f_3\left(2\alpha-\frac{f_2'}{f_2}-2\left(\frac{f_3'}{f_3}\right)^2+\frac{f_3''}{f_3}\right)dxdt-\frac{\partial^2}{\partial v^2}\Phi_1^{n+1}+2\frac{f_3'}{f_3}\frac{\partial}{\partial v}\Phi_1^{n+1}=0,
\end{equation}
and
\begin{equation}\label{REqSubdomain2}
\int_0^\infty\int_{\mathbb{T}}e_2^{n+1}f_1f_2f_3\left(2\alpha-\frac{f_2'}{f_2}-2\left(\frac{f_4'}{f_4}\right)^2+\frac{f_4''}{f_4}\right)dxdt-\frac{\partial^2}{\partial v^2}\Phi_2^{n+1}+2\frac{f_4'}{f_4}\frac{\partial}{\partial v}\Phi_2^{n+1}=0.
\end{equation}
The Robin boundary conditions become
\begin{eqnarray}\label{RRobinconditions1}\nonumber
\left(p+\frac{\partial}{\partial v}\right)\Phi_1^{n+1}(x,L_2)&=&\int_0^\infty\int_{\mathbb{T}}\left(p+\frac{\partial}{\partial v}\right)e_1^{n+1}f_1f_2f_3dxdt\\
& &+\int_0^\infty\int_{\mathbb{T}}e_1^{n+1}f_1f_2f_3'dxdt\\\nonumber
&=&\frac{f_3(L_2)}{f_4(L_2)}\left(p+\frac{\partial}{\partial v}\right)\Phi_2^{n}(x,L_2),
\end{eqnarray}
and
\begin{eqnarray}\label{RRobinconditions2}
\left(-q+\frac{\partial}{\partial v}\right)\Phi_2^{n+1}(x,L_1)&=&\frac{f_4(L_1)}{f_3(L_1)}\left(-q+\frac{\partial}{\partial v}\right)\Phi_1^{n}(x,L_1).
\end{eqnarray}
By  Theorem $\ref{Wellposed}$, if $(\Phi_1^n,\Phi_2^n)$ is well-defined and belongs to $(H^2(-\infty,L_2), H^2(L_1,\infty))$ then $(\Phi_1^{n+1},\Phi_2^{n+1})$  is well-defined and belongs to $(H^2(-\infty,L_2), H^2(L_1,\infty))$. Consider $(\ref{REqSubdomain1})$ with the index $n$ instead of $n+1$ on $\mathbb{T}\times(-\infty,L_1)$ and take $\varphi_1^n$ in $H^1(-\infty,L_1)$ as a test function, then
\begin{eqnarray*}
0&=&\int_{-\infty}^{L_1}\int_0^\infty\int_{\mathbb{T}}e_1^nf_1f_2f_3\left(-\frac{f_1'}{f_1}-\frac{f_2'}{f_2}-2\left(\frac{f_3'}{f_3}\right)^2+\frac{f_3''}{f_3}\right)\varphi_1^ndxdtdv\\
& &-\int_{-\infty}^{L_1}\frac{\partial^2}{\partial v^2}\Phi_1^n\varphi_1^ndv+\int_{-\infty}^{L_1}2\frac{f_3'}{f_3}\frac{\partial}{\partial v}\Phi_1^n\varphi_1^ndv.
\end{eqnarray*}
This implies
\begin{eqnarray}\label{RGreenleft}
& &\frac{\partial}{\partial v}\Phi_1^n(L_1)\varphi_1^n(L_1)-q\Phi_1^n(L_1)\varphi_1^n(L_1)\\\nonumber
&=&\int_{-\infty}^{L_1}\int_0^\infty\int_{\mathbb{T}}e_1^nf_1f_2f_3\left(-\frac{f_1'}{f_1}-\frac{f_2'}{f_2}-2\left(\frac{f_3'}{f_3}\right)^2+\frac{f_3''}{f_3}\right)\varphi_1^ndxdtdv\\\nonumber
& &+\int_{-\infty}^{L_1}\frac{\partial}{\partial v}\Phi_1^n\frac{\partial}{\partial v}\varphi_1^ndv+\int_{-\infty}^{L_1}2\frac{f_3'}{f_3}\frac{\partial}{\partial v}\Phi_1^n\varphi_1^ndv-q\Phi_1^n(L_1)\varphi_1^n(L_1).
\end{eqnarray}
Considering $(\ref{REqSubdomain2})$ on $\mathbb{T}\times(L_1,\infty)$ and taking $\varphi_2^{n+1}$ in $H^1(L_1,\infty)$ as a test function, we get
\begin{eqnarray}\label{RGreenrigt}
& &-\frac{\partial}{\partial v}\Phi_2^{n+1}(L_1)\varphi_2^{n+1}(L_1)+q\Phi_2^{n+1}(L_1)\varphi_2^{n+1}(L_1)\\\nonumber
&=&\int^{\infty}_{L_1}\int_0^\infty\int_{\mathbb{T}}e_2^{n+1}f_1f_2f_3\left(2\alpha-\frac{f_2'}{f_2}-2\left(\frac{f_3'}{f_3}\right)^2+\frac{f_3''}{f_3}\right)\varphi_2^{n+1}dxdtdv\\\nonumber
& &+\int^{\infty}_{L_1}\frac{\partial}{\partial v}\Phi_2^{n+1}\frac{\partial}{\partial v}\varphi_2^{n+1}dv+\int^{\infty}_{L_1}2\frac{f_3'}{f_3}\frac{\partial}{\partial v}\Phi_2^{n+1}\varphi_2^{n+1}dv+q\Phi_2^{n+1}(L_1)\varphi_2^{n+1}(L_1).
\end{eqnarray}
Equations $(\ref{RRobinconditions2})$, $(\ref{RGreenleft})$ and $(\ref{RGreenrigt})$ imply
\begin{eqnarray*}
& &-\frac{f_4(L_1)}{f_3(L_1)}\left[\int_{-\infty}^{L_1}\int_0^\infty\int_{\mathbb{T}}e_1^nf_1f_2f_3\left(2\alpha-\frac{f_2'}{f_2}-2\left(\frac{f_3'}{f_3}\right)^2+\frac{f_3''}{f_3}\right)\varphi_1^ndxdtdv\right.\\\nonumber
& &\left.+\int_{-\infty}^{L_1}\frac{\partial}{\partial v}\Phi_1^n\frac{\partial}{\partial v}\varphi_1^ndv+\int_{-\infty}^{L_1}2\frac{f_3'}{f_3}\frac{\partial}{\partial v}\Phi_1^ndv-q\Phi_1^n(L_1)\varphi_1^n(L_1)\right]\\\nonumber
&=&\int^{\infty}_{L_1}\int_0^\infty\int_{\mathbb{T}}e_2^{n+1}f_1f_2f_3\left(2\alpha-\frac{f_2'}{f_2}-2\left(\frac{f_3'}{f_3}\right)^2+\frac{f_3''}{f_3}\right)\varphi_2^{n+1}dxdtdv\\\nonumber
& &+\int^{\infty}_{L_1}\frac{\partial}{\partial v}\Phi_2^{n+1}\frac{\partial}{\partial v}\varphi_2^{n+1}dv+\int^{\infty}_{L_1}2\frac{f_3'}{f_3}\frac{\partial}{\partial v}\Phi_2^{n+1}\varphi_2^{n+1}dv+q\Phi_2^{n+1}(L_1)\varphi_2^{n+1}(L_1).
\end{eqnarray*}
In the above equality, choose $\varphi_2^{n+1}$ to be $\Phi_2^{n+1}$, and $\varphi_1^n$ to be the extension of $\Phi_2^{n+1}$ over $(-\infty,L_1)$ such that there exists a constant $C$ satisfying 
$$\|\varphi_1^n\|_{H^1(-\infty,L_1)}\leq C\|\Phi_2^{n+1}\|_{H^1(L_1,\infty)}$$
and 
$$\|\varphi_1^n\|_{L^2(-\infty,L_1)}\leq C\|\Phi_2^{n+1}\|_{L^2(L_1,\infty)},$$
to get
\begin{eqnarray}\label{Rbalance}\nonumber
& &-\frac{f_4(L_1)}{f_3(L_1)}\left(\int_{-\infty}^{L_1}\int_0^\infty\int_{\mathbb{T}}e_1^nf_1f_2f_3\left(2\alpha-\frac{f_2'}{f_2}-2\left(\frac{f_3'}{f_3}\right)^2+\frac{f_3''}{f_3}\right)\varphi_1^ndxdtdv\right.\\
& &\left.+\int_{-\infty}^{L_1}\frac{\partial}{\partial v}\Phi_1^n\frac{\partial}{\partial v}\varphi_1^ndv+\int_{-\infty}^{L_1}2\frac{f_3'}{f_3}\frac{\partial}{\partial v}\Phi_1^n\varphi_1^ndv-q\Phi_1^n(L_1)\varphi_1^n(L_1)\right)\\\nonumber
&=&\int^{\infty}_{L_1}\int_0^\infty\int_{\mathbb{T}}e_2^{n+1}f_1f_2f_3\left(2\alpha-\frac{f_2'}{f_2}-2\left(\frac{f_3'}{f_3}\right)^2+\frac{f_3''}{f_3}\right)\Phi_2^{n+1}dxdtdv\\\nonumber
& &+\int^{\infty}_{L_1}\left|\frac{\partial}{\partial v}\Phi_2^{n+1}\right|^2dv+\int^{\infty}_{L_1}2\frac{f_3'}{f_3}\frac{\partial}{\partial v}\Phi_2^{n+1}\Phi_2^{n+1}dv+q(\Phi_2^{n+1}(L_1))^2.
\end{eqnarray}
We now bound the right hand side of $(\ref{Rbalance})$ from below and the left hand side of $(\ref{Rbalance})$ from above. According to $(\ref{assumptionf3f4})$, the right hand side of $(\ref{Rbalance})$ is greater than or equal to
\begin{eqnarray}\label{Rbalance1}\nonumber
& &\int^{\infty}_{L_1}\alpha|\Phi_2^{n+1}|^2dv+\int^{\infty}_{L_1}\left|\frac{\partial}{\partial v}\Phi_2^{n+1}\right|^2dv+\int^{\infty}_{L_1}2\frac{f_3'}{f_3}\frac{\partial}{\partial v}\Phi_2^{n+1}\Phi_2^{n+1}dv\\\nonumber
&\geq &\int^{\infty}_{L_1}\alpha|\Phi_2^{n+1}|^2dv+\int^{\infty}_{L_1}\left|\frac{\partial}{\partial v}\Phi_2^{n+1}\right|^2dv-\left\|\frac{f_3'}{f_3}\right\|_{L^\infty(\mathbb{R})}\int^{\infty}_{L_1}\left(\epsilon\left|\frac{\partial}{\partial v}\Phi_2^{n+1}\right|^2+\frac{1}{\epsilon}|\Phi_2^{n+1}|^2\right)dv\\
&\geq &\int^{\infty}_{L_1}\frac{\alpha}{2}|\Phi_2^{n+1}|^2dv+\frac{1}{2}\int^{\infty}_{L_1}\left|\frac{\partial}{\partial v}\Phi_2^{n+1}\right|^2dv,
\end{eqnarray}
for $\alpha$ large enough. 
\\ Again due to $(\ref{assumptionf3f4})$, the left hand side of $(\ref{Rbalance})$ is less than or equal to
\begin{eqnarray}\label{Rbalance2}
& &\frac{f_4(L_1)}{f_3(L_1)}\left(\int_{-\infty}^{L_1}\alpha|\Phi_1^n||\varphi_1^n|dv+\int_{-\infty}^{L_1}\left|\frac{\partial}{\partial v}\Phi_1^n\right|\left|\frac{\partial}{\partial v}\varphi_1^n\right|dv\right.\\\nonumber
& &\left.+\int_{-\infty}^{L_1}2\left|\frac{f_3'}{f_3}\right|\left|\frac{\partial}{\partial v}\Phi_1^n\right||\varphi_1^n|dv+q|\Phi_1^n(L_1)||\varphi_1^n(L_1)|\right)\\\nonumber
& \leq& \frac{f_4(L_1)}{f_3(L_1)}C^*\left(\int_{-\infty}^{L_2}{\alpha}|\Phi_1^{n}|^2dv+\int_{-\infty}^{L_2}\left|\frac{\partial}{\partial v}\Phi_1^{n}\right|^2dv\right)\\\nonumber
& &+\frac{f_4(L_1)}{f_3(L_1)}C^*\left(\int_{-\infty}^{L_2}{\alpha}|\Phi_2^{n+1}|^2dv+\int_{-\infty}^{L_2}\left|\frac{\partial}{\partial v}\Phi_2^{n+1}\right|^2dv\right)\\\nonumber
& &\leq \frac{1}{8}\left(\int_{-\infty}^{L_2}{\alpha}|\Phi_1^{n}|^2dv+\int_{-\infty}^{L_2}\left|\frac{\partial}{\partial v}\Phi_1^{n}\right|^2dv\right)\\\nonumber
& &+\frac{1}{8}\left(\int_{-\infty}^{L_2}{\alpha}|\Phi_2^{n+1}|^2dv+\int_{-\infty}^{L_2}\left|\frac{\partial}{\partial v}\Phi_2^{n+1}\right|^2dv\right),
\end{eqnarray}
where $C^*$ is some constant not depending on $\alpha$.
\\ Compare the two inequalities $(\ref{Rbalance1})$ and $(\ref{Rbalance2})$ 
\begin{eqnarray*}\label{Rbalance3}
\int_{-\infty}^{L_2}{\alpha}|\Phi_1^{n+1}|^2dv+\int_{-\infty}^{L_2}\left|\frac{\partial}{\partial v}\Phi_1^{n+1}\right|^2dv\leq\frac{1}{2}\left(\int^{\infty}_{L_1}{\alpha}|\Phi_2^{n}|^2dv+\int^{\infty}_{L_1}\left|\frac{\partial}{\partial v}\Phi_2^{n}\right|^2dv\right).
\end{eqnarray*}
Similarly
\begin{eqnarray*}\label{Rbalance4}
\int^{\infty}_{L_1}{\alpha}|\Phi_2^{n+1}|^2dv+\int^{\infty}_{L_1}\left|\frac{\partial}{\partial v}\Phi_2^{n+1}\right|^2dv\leq\frac{1}{2}\left(\int_{-\infty}^{L_2}{\alpha}|\Phi_1^{n}|^2dv+\int_{-\infty}^{L_2}\left|\frac{\partial}{\partial v}\Phi_1^{n}\right|^2dv\right).
\end{eqnarray*}
Take the sum of the previous two inequalities to get
\begin{eqnarray*}\label{Contraction}
& &\int_{-\infty}^{L_2}{\alpha}|\Phi_1^{n+1}|^2dv+\int_{-\infty}^{L_2}\left|\frac{\partial}{\partial v}\Phi_1^{n+1}\right|^2dv+\int^{\infty}_{L_1}{\alpha}|\Phi_2^{n+1}|^2dv+\int^{\infty}_{L_1}\left|\frac{\partial}{\partial v}\Phi_2^{n+1}\right|^2dv\\\nonumber
&\leq&\frac{1}{2}\left(\int_{-\infty}^{L_2}{\alpha}|\Phi_1^{n}|^2dv+\int_{-\infty}^{L_2}\left|\frac{\partial}{\partial v}\Phi_1^{n}\right|^2dv+\int^{\infty}_{L_1}{\alpha}|\Phi_2^{n}|^2dv+\int^{\infty}_{L_1}\left|\frac{\partial}{\partial v}\Phi_2^{n}\right|^2dv\right).
\end{eqnarray*}
The conclusion of the Theorem follows by letting $n$ tend to $\infty$ in the previous estimate.
\end{proof}
\section{Numerical experiments}\label{numer}

In this section we provide some numerical tests to support the theoretical analysis of the previous sections. 

\subsection{Model problem}
We consider the initial boundary value problem 

\begin{equation}
\label{Model_problem}
\begin{aligned}
\dfrac{\partial u}{\partial t} + v \dfrac{\partial u}{\partial x} - \dfrac{\partial^2 u}{\partial v^2}  &= f \quad &\mbox{in } (0,T)\times [0,1]\times [-1,1]\\ \\
u(t,0,v) &= u(t,1,v)  &\mbox{on } (0,T)\times [-1,1]\\
\dfrac{\partial u}{\partial v} (t,x,-1) &= 0 &\mbox{on } (0,T)\times [0,1]\\
\dfrac{\partial u}{\partial v} (t,x,1) &= 0 &\mbox{on } (0,T)\times [0,1]\\
\end{aligned}
\end{equation}
equipped with homogeneous Neumann boundary conditions in $v$ and periodic boundary conditions in $x$.
We claim that different choices of boundary condition in $v=-1$ and $v=1$ do not affect the results we show in what follows. 
Since the problem is linear, we can directly test the convergence on the error equation ({\it i.e.} letting $f\equiv 0$) whose unknown, with a little abuse of notation, 
we still denote by $u$.

\subsection{Finite dimensional approximation on a single domain}

We briefly describe here the numerical approximation of equation \eqref{Model_problem}, and we focus for presentation purposes on a single domain.
We discretize equation \eqref{Model_problem} by an operator splitting technique (see e.g. \cite{Marchuk}), where we first solve a parabolic problem 
in $(t,v)$ for half the time step, and we correct it by explicitly advancing the transport part of the equation in $(t,x)$. Let then $\Delta t$ be the time 
discretization step, and let $\tau=\Delta t/2$. \\

\noindent
{\bf Step 1.} \quad Solve, in $[t,t+\tau]$, for all $x\in[0,1]$, \quad $\dfrac{\partial}{\partial t} w(t,x,v) -\dfrac{\partial^2 }{\partial v^2}w (t,x,v)= 0$. \\

\bigskip
\noindent
{\bf Step 2.} \quad For all $x\in[0,1]$, \quad  $u(t+\Delta t,x,v) = w (t,x-\tau v,v).$ \\

\bigskip
\noindent
We discretize the parabolic part of equations \eqref{Model_problem} with an implicit Euler scheme in $t$, and by finite elements in the $v$ direction 
(see e.g. \cite{QV:1994:NAPDE}). The transport part is solved explicitly by interpolation on the solution computed at Step 1. 
We denote by $h_x$ and $h_v$ the discretization steps in the $x$ and  $v$ variable, respectively, and by $N_x$ and $2N_v$ the corresponding 
grid point numbers. We let $x_m=m\,h_x$ ($m=0,..,N_x-1$), $v_i=-1+i\,h_v$ ($i=0,..,2N_v-1$), we denote by $\{\phi_j\}_{j=0,.., 2 N_v-1}$ 
a nodal basis for the finite element space associated to $v$, and we can approximate $u(t^n,x_m,v)$ by
$$
u(t^n,x_m,v) \ \sim\ u_m (t^n,v)\, = \, \sum _{j=0}^{N_v} u_{j,m}(t^n) \phi_j(v).
$$ 
For the sake of compactness in notations, for all $m=0,..,N_x$, we let $\vec{u}_m(t)=[u_{1,m}(t),...,u_{2N_v,m}(t)]^T$ and $\vec{u}_m^n = \vec{u}_m(t^n)$. \\

\noindent
The numerical approximation of \eqref{Model_problem} is then computed by the following operator splitting scheme.\\

\noindent
Given $\left\{{u}^{n}_{i,m}\right\}_{i=1,..,2N_v, m=1,..,N_x}$ \\

\noindent
{\bf Step 1.} \quad For $m=0,..., N_x-1$, solve

\begin{equation}
\label{step1}
\dfrac{1}{\tau}\,M \vec{u}^{n+1/2}_m + S  \vec{u}^{n+1/2}_m = \dfrac{1}{\tau}\,M  \vec{u}^{n}_m,
\end{equation}
where $M$ and $S$ are the mass and stiffness matrices, whose entries $(i,j)$ are given by   
\begin{equation}
\label{MMvS}
[M]_{ij}=\int_0^1 \phi_j\phi_i\,dv
\qquad \qquad
[S]_{ij}=\int_0^1 \dfrac{d \phi_j}{d v} \dfrac{d \phi_i}{d v}\,dv. 
\end{equation}
Let then $\vec{u}^{n+1/2}_m=\left[{u}^{n+1/2}_{1,m}, ..., {u}^{n+1/2}_{2 N_v,m}\right]^T$. \\

\noindent
{\bf Step 2.} \quad For $i=0,..,N_v-1$, set 

\begin{equation}
\label{step2_a} 
\begin{aligned}
{u}^{n+1}_{i,m} & =  \left(1-|v_i|\,\tau \right){u}^{n+1/2}_{i,m} +  \left(|v_i| \tau\,\right)  {u}^{n+1/2}_{i,m+1} \qquad \qquad & \mbox{for } m=1,...,N_x-1 \\ \\
{u}^{n+1}_{i,N_x} & =  {u}^{n+1}_{i,1}. &
\end{aligned}
\end{equation}

\medskip
\noindent
For $i=N_v,..,2 N_v-1$, set

\begin{equation}
\label{step2_b} 
\begin{aligned}
{u}^{n+1}_{i,1} & =  \left(1-|v_i|\,\tau \right){u}^{n+1/2}_{i,m} +  \left(|v_i| \tau\,\right)  {u}^{n+1/2}_{i,m-1} \qquad \qquad & \mbox{for } m=1,...,N_x-1 \\ \\
{u}^{n+1}_{i,1} & =  {u}^{n+1}_{i,N_x}. &
\end{aligned}
\end{equation}

\begin{remark}
In the numerical tests of the following section, we use linear finite elements and a Cavalieri-Simpson quadrature rule to evaluate these entries.  
Since the Cavalieri-Simpson rule is third order accurate, the matrices $M$ and $S$ are computed exactly. 
A complete stability and convergence analysis of the numerical procedure described here is beyond the scope of this paper and is the object  
of a forthcoming study.
\end{remark}

\subsection{Schwarz Waveform Relaxation}

We decompose the computational domain $\Omega=[0,T]\times [0,1]\times [-1,1]$  into two subdomains
\begin{equation}
\label{Om1_Om2}
\Omega_1 = [0,T]\times [0,1]\times [-1,\beta] \qquad\qquad \qquad \Omega_2 = [0,T]\times [0,1] \times [\alpha,1],
\end{equation}
which may or may not overlap ($\beta-\alpha\geq 0$). As a matter of fact, even if the analysis was carried on in the case of  
overlapping subdomains only, the use of Robin interface conditions in an Optimized Schwarz Waveform Relaxation (OSWR) algorithm
guarantees convergence also in the absence of overlap, a feature not shared by the Classical Schwarz Waveform Relaxation (CSWR)
one. In what follows we denote by $L=\beta-\alpha$ the size of the overlap between the two subdomains. \\
We introduce the interface variables 
\begin{equation}
\label{lambda12}
\lambda_1(t,x,\beta) = \mathcal{Q}_1\, u_2(t,x,\beta)  \qquad \qquad  \lambda_2(t,x,0) = \mathcal{Q}_2\, u_1(t,x,0),
\end{equation}
where the operators $\mathcal{Q}_1$ and $\mathcal{Q}_2$ are given by 
$$
\mathcal{Q}_1\, w\, = \,w \qquad \qquad \qquad \mathcal{Q}_2 \, w\, =\, w
$$
for the CSWR, and by
$$
\mathcal{Q}_1\, w = \left(p+\dfrac{\partial}{\partial v}\right)\, w \qquad \qquad \mathcal{Q}_2\, w = \left(q-\dfrac{\partial}{\partial v}\right)\, w
$$
for the OSWR. With these positions, the Schwarz Waveform Relaxation algorithms read as follows.\\

Given $\lambda_1^0(t,x,\beta)$ on $[0,T]\times[0,1]$, solve for $k\geq 1$ until convergence

\begin{equation}
\label{SWR1}
\begin{aligned}
\dfrac{\partial u_1^k}{\partial t} + v \dfrac{\partial u_1^k}{\partial x} - \dfrac{\partial^2 u_1^k}{\partial v^2}  &= 0 \qquad  &\mbox{in }  \Omega_1\ \\
u_1^k(t,0,v) &= u_1^k(t,1,v) \qquad &\mbox{on } [0,1]\ \\
\dfrac{\partial u_1^k}{\partial v} (t,x,-1) &= 0  \qquad &\mbox{on } [0,T]\times [0,1]\ \\ 
\mathcal{Q}_1\, u_1^k(t,x,\beta) &=\lambda^{k-1}_1(t,x,\beta) \qquad &\mbox{on } [0,T]\times[0,1], 
\end{aligned}
\end{equation}

\begin{equation}
\label{SWR2}
\lambda_2^k(t,x,\alpha) = \mathcal{Q}_2\, u_1^{k}(t,x,\alpha) \qquad \qquad \qquad \qquad  \mbox{on } [0,T]\times[0,1], 
\end{equation}

\begin{equation}
\label{SWR3}
\begin{aligned}
\dfrac{\partial u_2^k}{\partial t} + v \dfrac{\partial u_2^k}{\partial x} - \dfrac{\partial^2 u_2^k}{\partial v^2}  &= 0 \qquad & \mbox{in }  \Omega_2\ \\
u_2^k(t,0,v) &= u_2^k(t,1,v) \qquad &\mbox{on } [0,T]\times [0,1]\ \\
\dfrac{\partial u_2^k}{\partial v} (t,x,1)&= 0 \qquad &\mbox{on } [0,T]\times 0,1],  \\
\mathcal{Q}_2\, u_2^k(t,x,\alpha) &= \lambda_2^{k}(t,x,\alpha) \qquad &\mbox{on } [0,T]\times [0,1]
\end{aligned}
\end{equation}

\begin{equation}
\label{SWR4}
\lambda_1^{k}(t,x,\beta) = \mathcal{Q}_1\, u_2^k(t,x,\beta) \qquad \qquad \qquad \qquad \mbox{on } [0,T]\times [0,1]. 
\end{equation}

For a given tolerance $\varepsilon>0$, the Schwarz Waveform Relaxation algorithm \eqref{SWR1}-\eqref{SWR4} is considered to have reached convergence when 
%$$
%\dfrac{\left\| \lambda^k_1(x,\alpha+L) -\lambda_1^{k-1}(x,\alpha+L) \right\|_{L^2([0,1])}}{\left\| \lambda_1^{k-1}(x,\alpha+L) \right\|_{L^2([0,1])}} \ < \ \varepsilon.
%$$
\begin{equation}
\label{error}
\left\| u^k_1(t,x,v)-u^k_2(t,x,v)  \right\|_{L^\infty([0,T]\times[0,1])\times (\alpha,\beta)}\ < \ \varepsilon.
\end{equation}
 
\begin{remark}
The Schwarz waveform relaxation algorithm is serial in the form presented in \eqref{SWR1}-\eqref{SWR4}, but it can be easily parallelized 
by just replacing $\lambda_2^{k}(t,x,\alpha)$ with $\lambda_2^{k-1}(t,x,\alpha)$ in \eqref{SWR3}.

\end{remark}

\subsection{Optimization of the Robin parameters}
Since an analytical optimization of the Robin parameters $(p,q)$ is not available, we perform an empirical optimization both in the case of 
one-sided ($p=q$) and two-sided ($p\neq q$) interface conditions. We let $T=2$, and for the linearity of the problem we test directly the convergence
on the error equation. We discretize the domains $\Omega_1$ and $\Omega_2$ by a uniform grid. 
Since the mesh size in $v$ is not affecting the size of the interface problem, we use the same step $h_v$ in both $\Omega_1$ and $\Omega_2$,  
with $h_{v}=h_x=\Delta t=0.01$. As a consequence, the interface problems features $20,200$ unknowns. We choose an overlap of three elements 
($L=3 h_v$). We initialize the interface variable with a random value for $\lambda_1^0(t, x, \beta)$, in order to have all the frequencies represented 
in the initial error. Finally, we consider the algorithm to have converged when the error \eqref{error} drops below $\varepsilon=10^{-6}$.

\subsubsection{One-sided Optimized Schwarz Waveform Relaxation: OSWR(p)}
In Figure 1 (left) we plot the iteration counts needed to achieve convergence, as the parameter $p$ varies. In Figure 1 (right) 
we plot the error after 15 iteration for different values of $p$. The optimal parameter is numerically identified as $p^*=4.23$, by sampling the interval $(4,5)$
with step 0.001. Although the iteration counts is the same as for $p=4$, the Robin parameter $p^*$ features a steeper convergence history. This is the case also 
for $p=5$, which requires 2 more iterations to converge, but has a smaller error than $p=4$ after 15 iterations. 

\begin{figure}[!h]
\label{Figure:p}
\begin{center}
\includegraphics[width=0.4\textwidth]{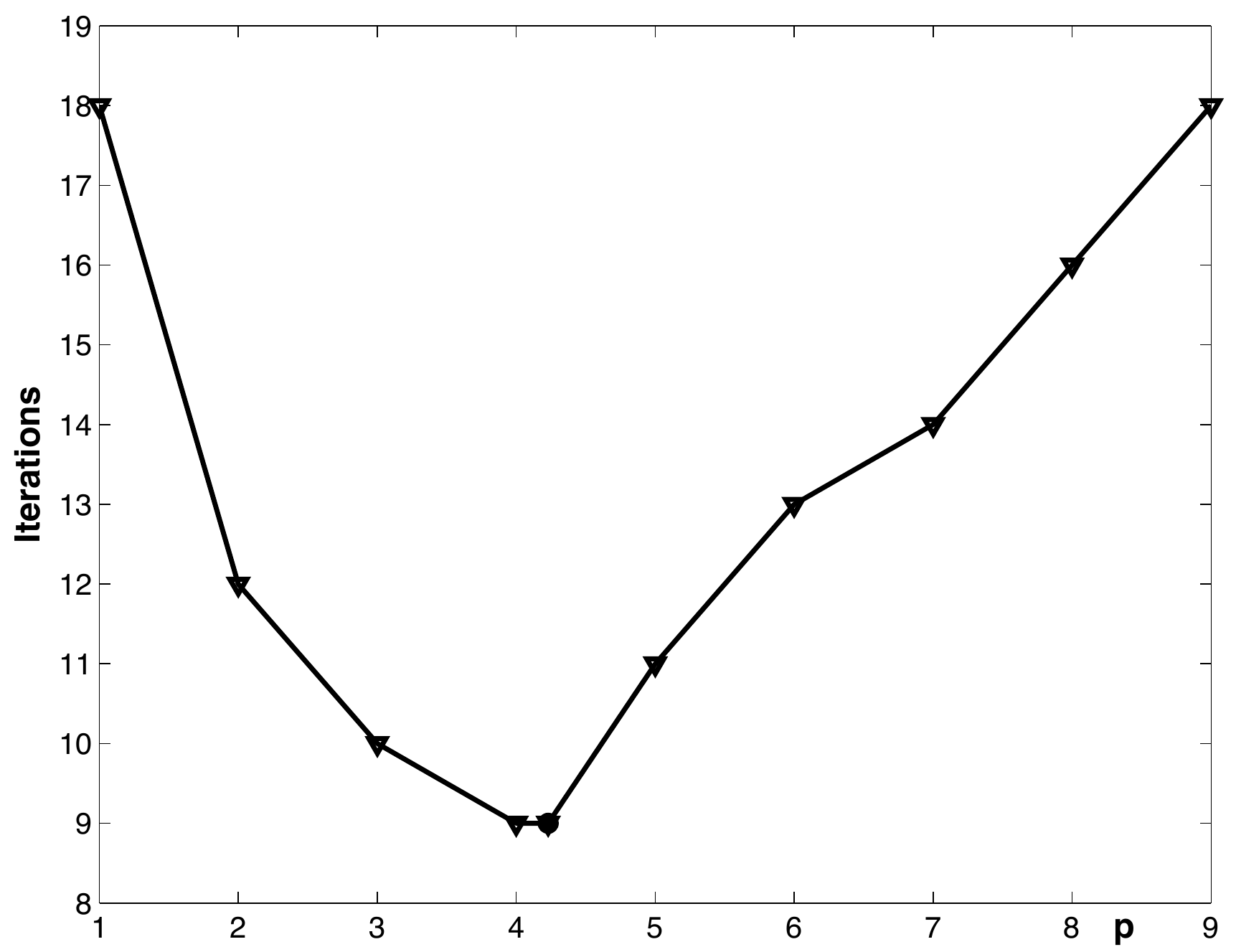} \qquad
\includegraphics[width=0.4\textwidth]{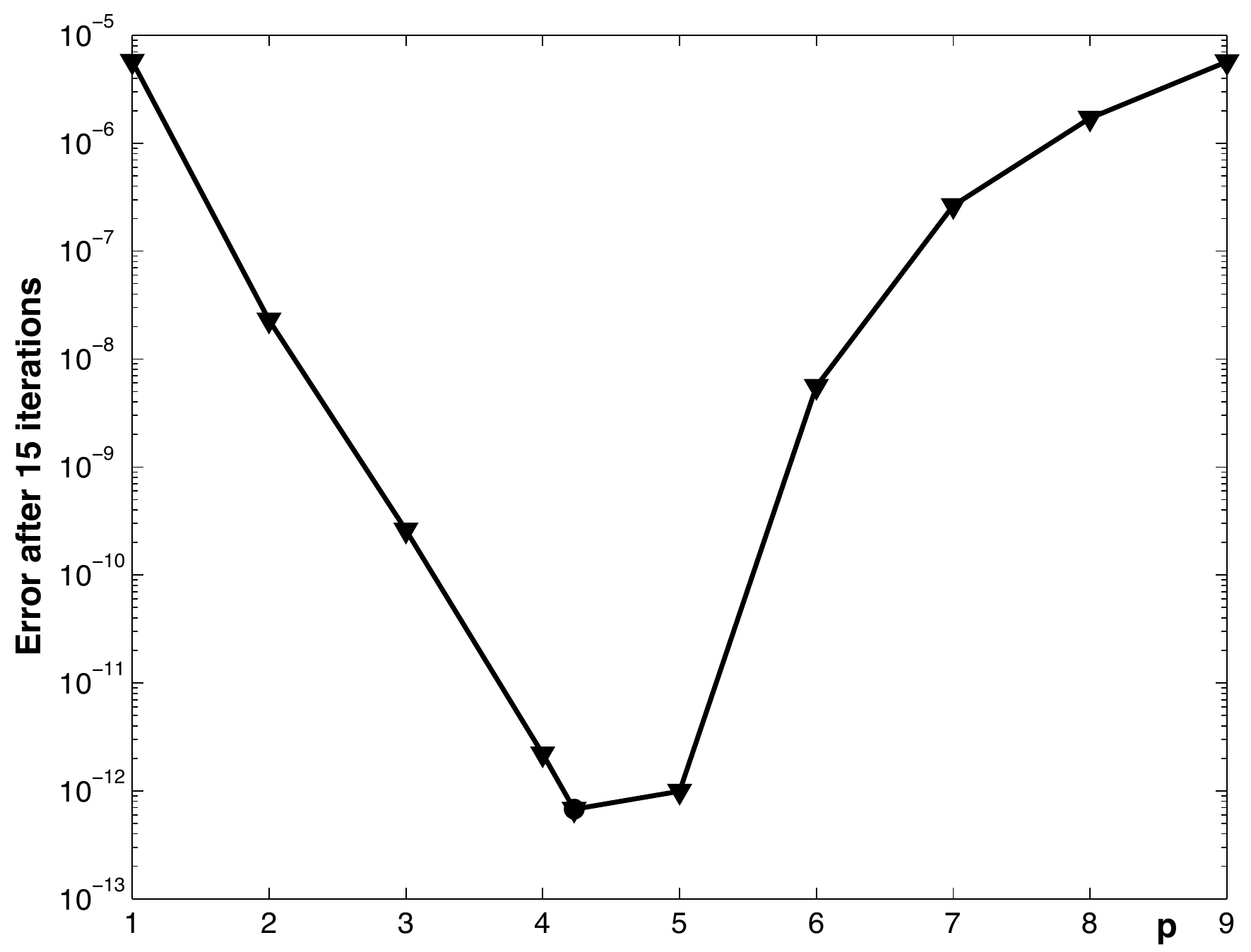}
\end{center}
\caption{OSWR(p). Left: iteration counts  to reach convergence as a function of the Robin parameter $p$. Right: error 
%($\| \lambda_1^k(t, x,L) \|_{\mathcal{L}^2([0,T]\times[0,1])} < 10^{-6}$) 
%$\| \lambda_1^k(t, x,L) \|_{\mathcal{L}^2([0,T]\times[0,1])}$ 
after 15 iterations as a function of  $p$.} 
\end{figure}
\noindent

\subsubsection{Optimized two-sided Schwarz Waveform Relaxation: OSWR(p,q)}
In Figure 2 (left) we plot the iteration counts needed to achieve convergence, as the parameters $p$ and $q$ vary. In Figure 2 (right) 
we plot the error after 15 iteration for different values of $p$ and $q$.

\begin{figure}[!h]
\label{Figure:iter_p4}
\begin{center}
\includegraphics[width=0.4\textwidth]{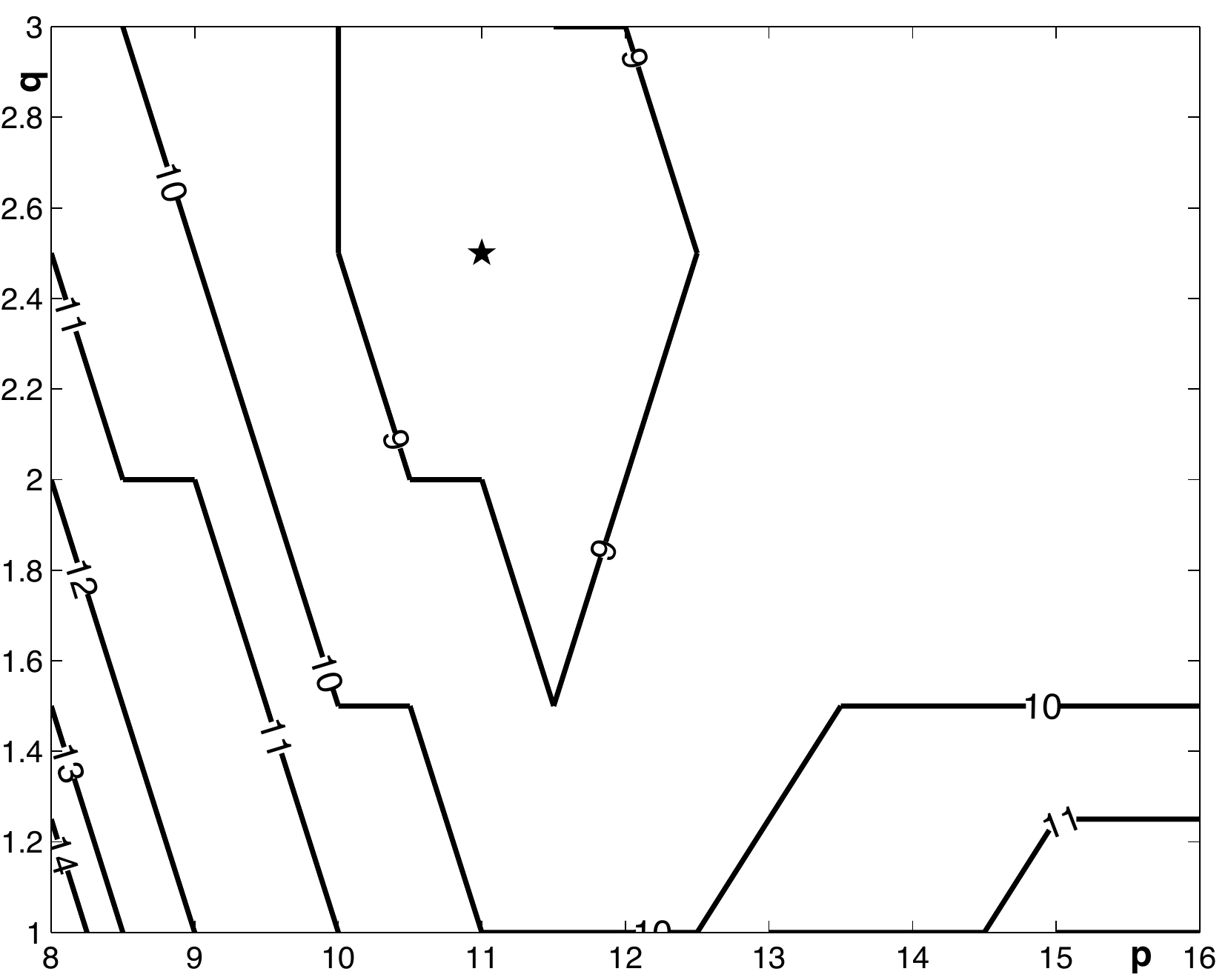} \qquad
\includegraphics[width=0.4\textwidth]{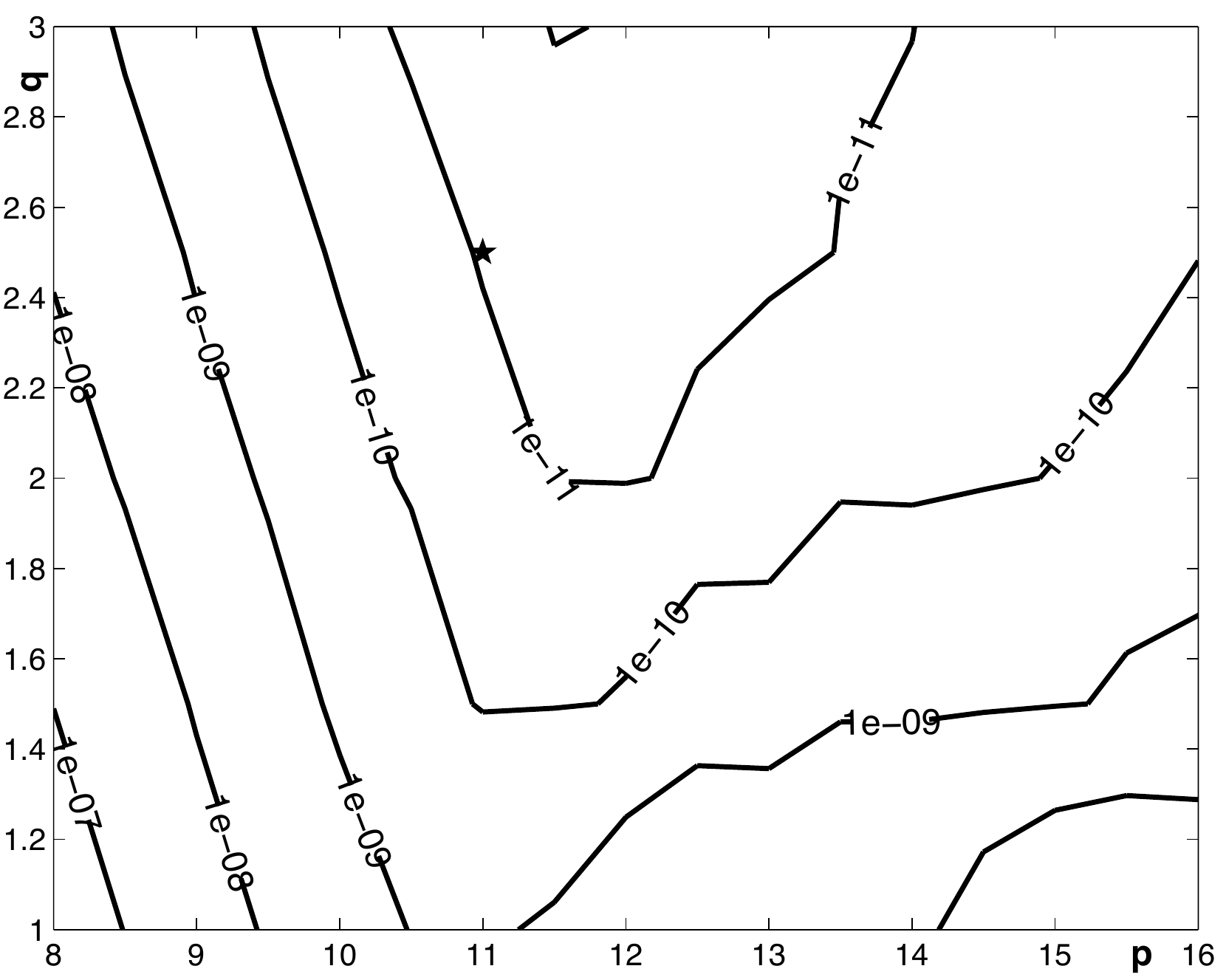}
\end{center}
\caption{OSWR(p,q). Left: iteration counts  to reach convergence as a function of the Robin parameters $p$ and $q$. Right: error 
after 15 iterations as a function of  $(p,q)$.} 
\end{figure}

\subsection{Comparison between Optimized and Classical Schwarz Waveform Relaxation}
We compare in this section the performance of the Classical and Optimized algorithms. 
We consider both non-overlapping and overlapping decompositions as in \eqref{Om1_Om2}, always with the overlap of thre element ($L=3 h_v$). 
Following the results from the previous Section, we implemented OSWR(p) with $p=4.23$, and with 
OSWR(p,q) with $p=11$ and $q=2.5$. We consider a reference mesh size $\Delta t=h_{v}=h_x=0.01$, and test the behavior of 
the algorithm in four successive dyadic mesh refinements, $\tau_j=2^{-j} \times$ 0.01 ($\tau= \Delta t, h_x, h_v$), with $j=0,..,4$. 
We report the results in Table 1.\\
In the overlapping case, both OSWR(p) and OSWR(p,q) algorithm appear to be almost insensitive to the mesh refinement, while
the CSWR appears to be very sensitive to it. The two-sided OSWR(p,q) appears globally more robust in terms 
of iteration counts with respect to the one-sided OSWR(p), whose iteration counts still remain more than reasonable. Both algorithms outperform the CSWR.\\
In the non-overlapping case, a similar pattern is observed for OSWR(p) and OSWR(p,q). Both algorithms appear to be a little sensitive to the size of the 
interface problem. However, iteration counts are higher than in the overlapping case, but not significantly higher. 
The OSWR(p,q) is more robust than the OSWR(p), featuring an increase of around 50\% in iterations for the most refined case, while the latter experiences 
a doubling. For both algorithms, however, the iteration counts remain reasonable in all cases. Finally, as expected, CSWR does not converge in the absence of 
overlap. Finally, we plot in Figure 3 the convergence history of the three overlapping algorithms at level $j=2$ of refinement.

\begin{table}[htp!]
\begin{center}
\begin{tabular}{|l||c|c|c|c|c||}
\hline %\hline
%\multicolumn{1}{|r||}{Overlap} 
$\Delta_t=2^{-j}\times$0.01&\multicolumn{5}{|c||}{Overlapping} 
\\
$h_x=2^{-j}\times$0.01&\multicolumn{5}{|c||}{ ($L = 3\times h_v$)} 
\\
\cline{2-6} 
$h_v=2^{-j}\times$0.01&  $j=0$& $j=1$ & $j=2$ & $j=3$& $j=4$ \\
\hline
{\bf CSWR} & 70 & 105 & 132 & $>$150 & $>$150\\
\hline 
{\bf OSWR(p)}& 9 & 12 & 15 & 17 &  18\\
\hline 
{\bf OSWR(p,q)}& 9 & 10 & 10 & 10 & 13\\
\hline 
\hline
$\Delta_t=2^{-j}\times$0.01&\multicolumn{5}{|c||}{Non-overlapping} 
\\
$h_x=2^{-j}\times$0.01&\multicolumn{5}{|c||}{ ($L = 0$)} 
\\
\cline{2-6} 
$h_v=2^{-j}\times$0.01&  $j=0$& $j=1$ & $j=2$ & $j=3$& $j=4$ \\\hline
{\bf CSWR} & -& -& -& -& -\\
\hline 
{\bf OSWR(p)}& 12& 17 & 20 & 23 & 26\\
\hline 
{\bf OSWR(p,q)}& 11& 12 & 13 & 14 &16\\
\hline 
\end{tabular}
\caption{Classical vs Optimized Schwarz Waveform Relaxation: iteration counts to achieve convergence for successive dyadic refinements. Overlapping
($L=3 h_v$), and non-overlapping decomposition ($L=0$).}
\label{refinement}
\end{center}
\end{table}
\begin{figure}[!h]
\begin{center}
\includegraphics[width=0.6\textwidth]{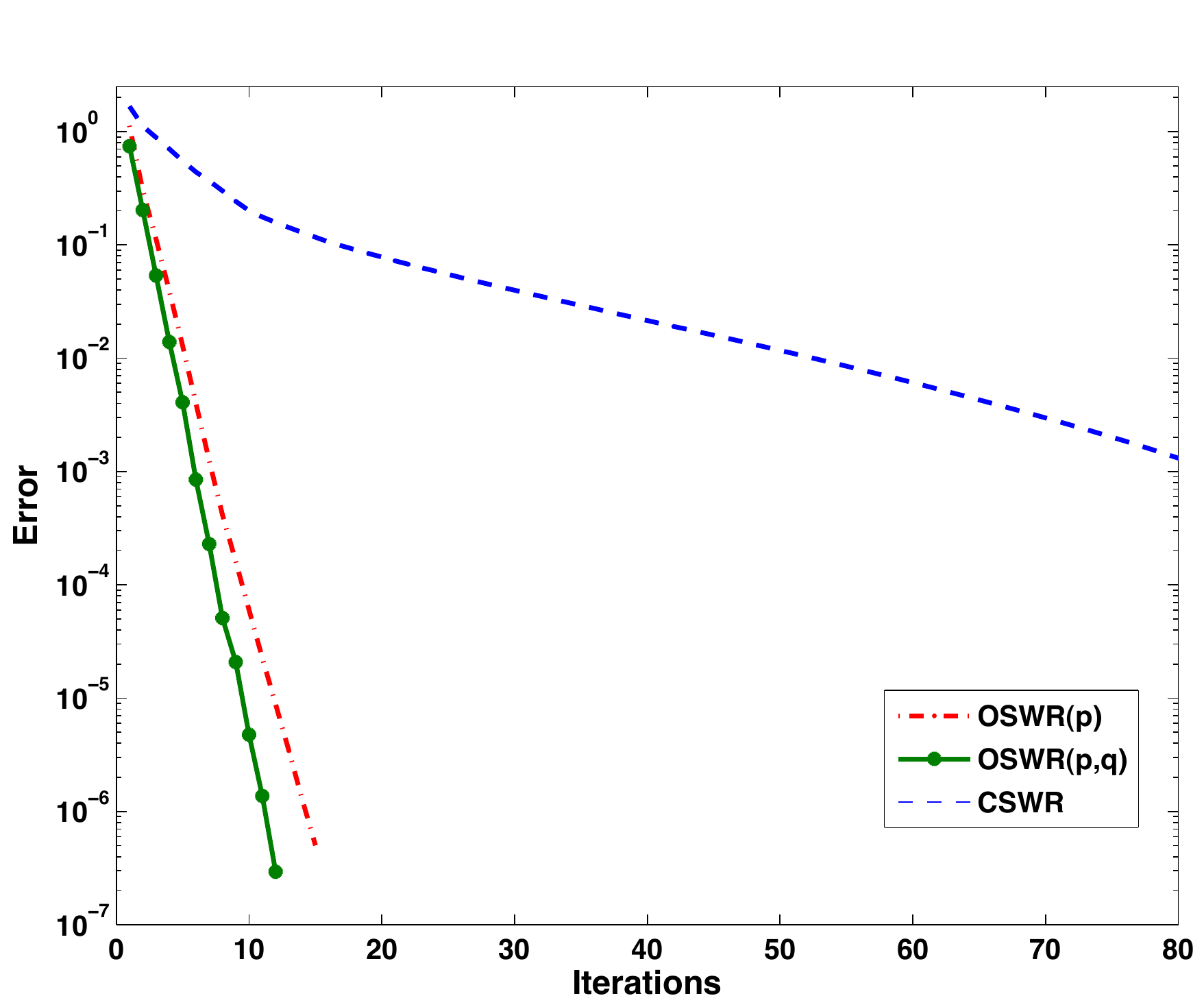}
\end{center}
\caption{Overlapping Schwarz Waveform Relaxation. Convergence history for the three different algorithm, CSWR (blue dashed line), OSWR(p) 
(red dot-dashed line), and OSWR(p,q) (green solid line).} 
\end{figure}

\section{Conclusion}\label{secconclu}
We have designed some new Schwarz waveform relaxation algorithms adapted to the context of the Kolmogorov equations. The domain is split in the $v$-direction, which is the 'parabolic' direction of the equation. The algorithms are proven to be well-posed, stable and useful in both numerical and theoretical senses. The Kolmogorov operator is hypoelliptic and it has properties of both hyperbolic and parabolic operators. Domain decomposition methods for hyperbolic problems are sometimes unstable, even for optimized algorithms, which means that the hyperbolicity of the operator really affects the convergence rates of the algorithm. In our situation, the algorithms are stable in both cases: classical and optimized algorithms.  The theoretical and numerical results in this paper show that the equation is more parabolic than hyperbolic, in the regime of domain decomposition. Moreover, according to our results, the Schwarz waveform relaxation algorithms for the Kolmogorov equation have almost the same properties with an advection diffusion equation or a heat equation. 
\\ {\bf Acknowledgements.} The second author would like to thank his advisor, Professor Enrique Zuazua, for suggesting this topic to him and for his kind and wise guidance. He is also grateful to Professor Jos\'e Antonio Carrillo for fruitful discussions. The second author has been supported by by Grant MTM2011-29306-C02-00, MICINN, Spain, ERC Advanced Grant FP7-246775 NUMERIWAVES, and Grant PI2010-04 of the Basque Government.
\bibliographystyle{plain}\bibliography{Kolmogorov}
\end{document}